\numberwithin{equation}{section}
\theoremstyle{plain}
\newtheorem{theorem}{Theorem}[section]
\newtheorem{lemma}[theorem]{Lemma}
\newtheorem{corollary}[theorem]{Corollary}
\newtheorem{proposition}[theorem]{Proposition}
\theoremstyle{definition}
\newtheorem{definition}[theorem]{Definition}
\newtheorem{example}[theorem]{Example}
\theoremstyle{remark}
\newtheorem{case[theorem]}{Case}
\def\R{{\mathbb R}}
\def\Z{{\mathbb Z}}
\def\T{{\mathbb T}}
\def\G{{\mathcal G}}
\def\HD{{\mathbb H^d}}
\def\F{{\mathcal F}}
\def\P{{\mathbb P}}
\def\H{{\mathcal H}}
\def\HS{{\mathcal{HS}}}
\def\z{{\mathfrak z}}
\def\g{{\bf g}}
\def \l {\lambda}
\def\A{{\mathbb A}}
\def\B{{\mathbb B}}
\def\norm#1.#2.{\lVert#1\rVert_{#2}}
\def\R{\mathbb R}
\newcommand{\psis}[2]{\langle #1\rangle_{#2}}
\newcommand{\uir}[1]{\pi_{#1}}
\newcommand{\lr}{L}
\begin{document}

\title{Bracket map for Heisenberg group and the characterization of cyclic subspaces}
\date{\today}

\author{Davide Barbieri, Eugenio Hernandez, Azita Mayeli}

\maketitle

\begin{abstract}
The bracket map was originally considered in \cite{HSWW} for locally compact abelian groups. In this work we extend
the study of bracket maps to the noncommutative setting, providing characterizations of bases and frames for cyclic subspaces of the Heisenberg group. We also indicate how to generalize these results to  a  class of non-abelian nilpotent Lie groups whose irreducible representations are square integrable modulo the center.
\footnote{
\thanks{The research of the first author was supported by Grant DIM2011 - R\'egion \^{I}le de France. The research of the second author was supported by Grant MTM2010-16518 (Spain). The research of the third author was partially supported by PSC-CUNY Grant 65508-0043 (USA).}

\textit{2010 Mathematics Subject Classification}: 43A80, 22E30

{\textit{Keywords}: Heisenberg group, Riesz bases,  frames, cyclic subspaces, Fourier transform. }}

\end{abstract}

\section{Introduction}

Principal shift invariant subspaces obtained as the closed linear span of integer translates of a single function $\psi \in L^2(\R^d)$ have been studied recently in connection with the theory of Multiresolution Analysis for wavelets.

The general situation could be described as follows. Let $\G$ be a topological group, and denote by $g \mapsto T_g$ a unitary representation of $\G$ on a Hilbert space $\H$. The Principal Shift Invariant (PSI) subspace generated by $\psi \in \H$, denoted by $\psis{\psi}{\G,T}$, is the closure in $\H$ of the linear span generated by $\{T_g \psi\}_{g \in \G}$, and one could ask to characterize those $\psi \in \H$ so that the collection $\{T_g \psi\}_{g \in \G}$ is an orthonormal basis, a Riesz basis or a frame for $\psis{\psi}{\G,T}$.

Such characterizations have been provided for countable abelian groups $\G$ equipped with the discrete topology, whose unitary representations are dual integrable (see \cite{HSWW} for details). Such characterizations are given in terms of a bracket map
$$
[\cdot , \cdot] : \H \times \H \rightarrow L^1(\hat{\G}, d\alpha)
$$
where $\hat{\G}$ is the character group of $\G$ with Haar measure $d\alpha$, such that
\begin{equation}\label{equ:dual-integrable}
\langle \varphi, T_g \psi \rangle = \int_{\hat{\G}} [\varphi, \psi](\alpha) \overline{\alpha(g)} d\alpha \quad \forall \ \varphi, \psi \in \H .
\end{equation}
Two classical examples of bracket maps are the following. The first one is the map $k \mapsto T_k$ given by $T_k f(x) = f(x + k)$ as a unitary representation of $(\Z^d,+)$ on $L^2(\R^d)$. In this case
\begin{equation}\label{equ:commutative-translations-bracket}
[\varphi, \psi](\xi) = \sum_{l \in \Z^d} \hat{\varphi}(\xi + l) \overline{\hat{\psi}(\xi + l)}, \quad \xi \in \R^d, \ \varphi, \psi \in L^2(\R^d)
\end{equation}
where $\hat{\psi}$ stands for the Fourier transform of $\psi$. The bracket map for this situation has appeared in \cite{BVR1, BVR2}.
The second one is the Gabor unitary representation of the group $(\Z^d \times \Z^d,+)$ on $L^2(\R^d)$ given by
$$
(k,l) \mapsto M_l T_k \psi(x) = e^{-2\pi i l\cdot x} \psi(x - k).
$$
In this case
\begin{equation}\label{equ:commutative-gabor-bracket}
[\varphi, \psi](x,\xi) = Z\varphi(x,\xi)\overline{Z\psi(x,\xi)}, \quad x,\xi \in \R^d, \ \varphi, \psi \in L^2(\R^d)
\end{equation}
where
$$
Z\psi(x,\xi) = \sum_{k \in \Z^d} e^{-2\pi i k \cdot \xi} \psi(x - k)
$$
is the Zak transform of the function $\psi \in L^2(\R^d)$. The bracket map for this situation has appeared in \cite{HW1989, HP2006}.

An appearence of the bracket map in disguise is in \cite{AAR} where invariant subspaces for local fields are studied with respect to suitable defined translations.

As far as we know, no attempt has been done to provide a bracket map for non-Abelian groups. Our purpose in this paper is to provide such a map for left translations on the Heisenberg group. A characterization of frames and Riesz bases for left translations on nilpotent Lie groups whose irreducible representations are square integrable modulo the center (which include the Heisenberg group), also called $SI/Z$ Lie groups, has been recently announced in \cite{CMO}. These characterizations are in terms of range functions, rather than bracket maps.

In this paper the Heisenberg group $\HD$ is identified with $\R^d \times \R^d \times \R$ and noncommutative group law given by
\begin{equation}\label{equ:Hlaw}
(p,q,t) (p',q',t') = (p + p', q + q', t + t' + p\cdot q').
\end{equation}
where $\cdot$ stands for $\R^d$ scalar product. It is well known that inequivalent unitary irreducible representations $\Pi$ of $\HD$ on a Hilbert space $\H$ are indexed by $\lambda \in \R \setminus \{0\}$ (see \cite{F1995}). With them we can define the Fourier transform of $\varphi \in L^2(\HD)$ as
\begin{equation}\label{equ:Hfourier}
\F_\HD\varphi(\lambda) = \int_\HD \varphi(x) \Pi_\lambda(x) dx
\end{equation}
and those are Hilbert-Schmidt operators on $\H$.

Our definition of the bracket map for the Heisenberg group is the following.

\begin{definition}\label{def:HDbracket}
Given $\varphi, \psi \in L^2(\HD)$ and $\alpha \in (0,1]$, we define
\begin{equation}\label{equ:Hbracket}
[\varphi, \psi](\alpha) = \sum_{j \in \Z} \langle \F_\HD \varphi(\alpha + j), \F_\HD \psi(\alpha + j)\rangle_{\HS} |\alpha + j|^d
\end{equation}
where $\langle \cdot , \cdot \rangle_{\HS}$ denotes the inner product on the space of Hilbert-Schmidt operators on $\H$.
\end{definition}

As we will see, the appearence of $|\alpha + j|^d$ is related to the Plancherel measure on the unitary dual of $\HD$. Such bracket map will allow us to study PIS subspaces with respect to left translations of discrete subgroups of $\HD$.

Left translations $\lr$ are unitary representations on $L^2(\HD)$ defined by
\begin{equation}\label{equ:leftregular}
g \mapsto \lr_g\psi(x) = \psi(g^{-1} x), \quad g, x \in \HD, \ \psi \in L^2(\HD)
\end{equation}
and we will consider lattice subgroups $\Gamma$ of $\HD$ with the following form: for $\A, \B \in GL(d,\R)$ matrices such that $\A\B^t \in \Z$, we let
\begin{equation}\label{eq:lattice}
\Gamma= \A\Z^d\times \B\Z^d\times \Z .
\end{equation}
In particular, we will distinguish the center of $\Gamma$ as a group from the other variables, denoting $\Gamma_1= \A\Z^d \times \B\Z^d$ and $\Gamma_2 = \Z$. This permits to write $\Gamma = \Gamma_1\Gamma_2$ in the sense that any element $\gamma \in \Gamma$ can be written as $\gamma = \gamma_1 \gamma_2$, where $\gamma_i \in \Gamma_i$, $i = 1,2$ and composition is made according to (\ref{equ:Hlaw}).

Properties of the bracket map (\ref{equ:Hbracket}) are given in Section \ref{sec:HDbracket}, where the following result is proved.

\begin{theorem}\label{theo:orthogonal} Let $\psi$ be in $L^2(\HD)$, $\psi \neq 0$. Then the family $\{\lr_\gamma\psi\}_{\gamma \in \Gamma}$ is an orthonormal basis for $\psis{\psi}{\Gamma,\lr}$ if and only if
\begin{equation}\label{equ:orthonormalitycondition}
[\psi, \lr_{\gamma_1}\psi](\alpha) = \delta_{\gamma_1,0} \quad \textnormal{a.e.} \ \alpha \in (0,1], \ \forall \ \gamma_1\in \Gamma_1 .
\end{equation}
\end{theorem}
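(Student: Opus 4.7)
The plan is to reduce orthonormality to the equality of Fourier coefficients, via the bracket map on the fundamental domain $(0,1]$ of the center's dual. The key input is the formula
\[
\F_\HD(\lr_\gamma\psi)(\lambda) = \Pi_\lambda(\gamma) \F_\HD\psi(\lambda),
\]
together with the Plancherel identity on $\HD$ which, in the conventions of (\ref{equ:Hfourier}), reads
\[
\langle \varphi, \eta \rangle_{L^2(\HD)} = \int_\R \langle \F_\HD\varphi(\lambda), \F_\HD\eta(\lambda)\rangle_\HS |\lambda|^d \, d\lambda.
\]
Since translates by $\gamma$ and $\gamma'$ differ from translates by $(\gamma')^{-1}\gamma$ only by a unitary, $\{\lr_\gamma\psi\}_{\gamma\in\Gamma}$ is orthonormal iff $\langle \psi, \lr_\gamma\psi\rangle = \delta_{\gamma,e}$ for all $\gamma\in\Gamma$. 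Completeness as a basis for $\psis{\psi}{\Gamma,\lr}$ is then automatic since the latter is by definition the closed linear span.

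First I would write $\gamma = \gamma_1\gamma_2$ with $\gamma_1\in\Gamma_1$ and $\gamma_2 = (0,0,n)\in\Gamma_2$, and use that $\Pi_\lambda$ is scalar on the center with $\Pi_\lambda(\gamma_2) = e^{2\pi i \lambda n}\,\mathrm{Id}$. Applying Plancherel to $\langle \psi, \lr_{\gamma_1\gamma_2}\psi\rangle$ gives
\[
\langle \psi, \lr_{\gamma_1\gamma_2}\psi\rangle = \int_\R e^{-2\pi i \lambda n} \langle \F_\HD\psi(\lambda), \Pi_\lambda(\gamma_1)\F_\HD\psi(\lambda)\rangle_\HS |\lambda|^d\, d\lambda.
\]

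Next I periodize by writing $\lambda = \alpha+j$ with $\alpha\in(0,1]$, $j\in\Z$; since $e^{-2\pi i j n}=1$, the phase factors out of the sum in $j$, and the inner sum is precisely $[\psi, \lr_{\gamma_1}\psi](\alpha)$ by Definition \ref{def:HDbracket} applied to $\lr_{\gamma_1}\psi$. Thus
\[
\langle \psi, \lr_{\gamma_1\gamma_2}\psi\rangle = \int_0^1 [\psi, \lr_{\gamma_1}\psi](\alpha)\, e^{-2\pi i n \alpha}\, d\alpha,
\]
so $\langle \psi, \lr_{\gamma_1\gamma_2}\psi\rangle$ is the $n$-th Fourier coefficient on $(0,1]$ of the function $\alpha\mapsto [\psi, \lr_{\gamma_1}\psi](\alpha)$. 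A preliminary lemma (to be assembled among the properties of the bracket map in Section \ref{sec:HDbracket}, via Cauchy--Schwarz for $\langle\cdot,\cdot\rangle_\HS$ combined with Plancherel applied to $\psi$ and $\lr_{\gamma_1}\psi$) shows that $[\psi,\lr_{\gamma_1}\psi]\in L^1(0,1]$, so uniqueness of Fourier coefficients applies.

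The equivalence then follows at once. If condition (\ref{equ:orthonormalitycondition}) holds, the Fourier coefficients on the right equal $\delta_{\gamma_1,0}\delta_{n,0}$, proving orthonormality. Conversely, if $\{\lr_\gamma\psi\}_{\gamma\in\Gamma}$ is orthonormal, then for each fixed $\gamma_1\in\Gamma_1\setminus\{0\}$ all Fourier coefficients (indexed by $n\in\Z$) of $[\psi,\lr_{\gamma_1}\psi]$ vanish, forcing it to be zero a.e., while for $\gamma_1=0$ the coefficients equal $\delta_{n,0}$, forcing $[\psi,\psi]\equiv 1$ a.e. The only subtle point I foresee is verifying integrability of the bracket on $(0,1]$ and justifying the interchange of sum and integral in the periodization; both should follow from Plancherel on $\HD$ and Tonelli applied to the nonnegative majorant $|[\psi,\psi](\alpha)|$.
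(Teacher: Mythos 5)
Your proposal is correct and follows essentially the same route as the paper: reduce orthonormality to $\langle \psi, \lr_{\gamma_1\gamma_2}\psi\rangle = \delta_{\gamma,e}$ by unitarity of $\lr$, identify this inner product via Plancherel and periodization as the $\gamma_2$-th Fourier coefficient of $[\psi,\lr_{\gamma_1}\psi]$ (the paper's identity (\ref{equ:lrr-bracket})), and conclude by the $L^1$ bound on the bracket (the paper's Lemma \ref{lem:sesquilinear}) together with uniqueness of Fourier coefficients. The only difference is that you re-derive (\ref{equ:lrr-bracket}) inline rather than citing it as a separate corollary, and you make explicit the (correct) remark that completeness is automatic for the closed linear span.
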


Section \ref{sec:technical results} is devoted to prove a technical result of crucial importance for the results in Section \ref{sec:Riesz bases and frames}. Namely, for $\varphi, \psi \in L^2(\HD)$ we define
\begin{equation}\label{equ:Sfunctional}
S_\psi[\varphi](\alpha) \doteq \chi_{E_\psi}(\alpha) \frac{[\varphi,\psi](\alpha)}{[\psi,\psi](\alpha)} , \quad \alpha \in (0,1]
\end{equation}
where
$$
E_\psi = \{\alpha \in (0,1] : [\psi,\psi](\alpha) > 0\}
$$
and prove the following:

\begin{theorem}\label{theo:Isometry}
The map $S_\psi$ is a linear isometry from
$$
\psis{\psi}{\Gamma_2, L} = \overline{\textnormal{span}\{\lr_{\gamma_2}\psi : \gamma_2\in \Gamma_2\}}^{L^2(\HD)}
$$
onto the weighted Hilbert space $L^2((0,1],[\psi,\psi](\alpha)d\alpha)$.
\end{theorem}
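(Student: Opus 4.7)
My proof plan is to verify the isometry identity on the dense subspace $D = \operatorname{span}\{\lr_{\gamma_2}\psi : \gamma_2 \in \Gamma_2\}$ of $\psis{\psi}{\Gamma_2,\lr}$, extend it by density/continuity, and then establish surjectivity via a Fourier uniqueness argument.

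Writing $\gamma_2 = (0,0,m)$ for $m \in \Z$, the central elements of $\HD$ act by scalars in every unitary irreducible representation: $\Pi_\lambda(0,0,m) = e^{2\pi i\lambda m}I$. Substituting into (\ref{equ:Hfourier}) yields $\F_\HD(\lr_{(0,0,m)}\psi)(\lambda) = e^{2\pi i \lambda m}\F_\HD\psi(\lambda)$, and because the phase $e^{\pm 2\pi i(\alpha+j)m}$ is $j$-independent, the bracket in (\ref{equ:Hbracket}) satisfies the quasi-periodicity
\[
[\lr_{(0,0,m)}\psi,\psi](\alpha) = e^{2\pi i\alpha m}[\psi,\psi](\alpha),
\]
from which $S_\psi[\lr_{(0,0,m)}\psi](\alpha) = \chi_{E_\psi}(\alpha)e^{2\pi i\alpha m}$. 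Pairing the Heisenberg Plancherel identity $\langle \varphi,\eta\rangle_{L^2(\HD)} = \int_\R \langle \F_\HD\varphi(\lambda),\F_\HD\eta(\lambda)\rangle_{\HS}|\lambda|^d\,d\lambda$ with $\eta=\lr_{(0,0,m)}\psi$ and periodizing the $\lambda$-integral into the fundamental domain $(0,1]$ yields the key pairing
\[
\langle \varphi,\lr_{(0,0,m)}\psi\rangle = \int_0^1 e^{-2\pi i\alpha m}[\varphi,\psi](\alpha)\,d\alpha.
\]

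For $\varphi=\sum_n c_n\lr_{(0,0,n)}\psi\in D$, expanding $\|\varphi\|^2_{L^2(\HD)} = \sum_{n,n'}c_n\overline{c_{n'}}\langle \lr_{(0,0,n)}\psi,\lr_{(0,0,n')}\psi\rangle$ and combining the two displayed identities collapses this to $\int_0^1\big|\sum_n c_n e^{2\pi i\alpha n}\big|^2[\psi,\psi](\alpha)\,d\alpha$, which equals $\|S_\psi\varphi\|^2_{L^2((0,1],[\psi,\psi]d\alpha)}$ by the formula for $S_\psi$ on generators and the fact that $\chi_{E_\psi}[\psi,\psi]=[\psi,\psi]$. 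Passing to the closure requires continuity of $S_\psi$ on $L^2(\HD)$, which is immediate from the pointwise Cauchy-Schwarz bound $|[\varphi,\psi](\alpha)|^2 \leq [\varphi,\varphi](\alpha)[\psi,\psi](\alpha)$ together with $\int_0^1[\varphi,\varphi](\alpha)\,d\alpha = \|\varphi\|^2$.

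For surjectivity I would show that $\{\chi_{E_\psi}(\alpha)e^{2\pi i\alpha m}\}_{m\in\Z}$ is total in $L^2((0,1],[\psi,\psi]d\alpha)$. Orthogonality of some $f$ to every element of this family means that $g(\alpha):=f(\alpha)\chi_{E_\psi}(\alpha)[\psi,\psi](\alpha)$ has vanishing Fourier coefficients; since $g\in L^1((0,1])$ by Cauchy-Schwarz (using $\int_0^1[\psi,\psi]\,d\alpha=\|\psi\|^2<\infty$ via Plancherel), Fourier uniqueness forces $g\equiv 0$, and positivity of $[\psi,\psi]$ on $E_\psi$ then gives $f=0$ in the weighted space. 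The most delicate step will be justifying the periodization that links the Plancherel integral on $\R$ to the bracket integral on $(0,1]$; once this bracket--Plancherel pairing is in place, the norm computation and the surjectivity argument are routine Hilbert-space manipulations.
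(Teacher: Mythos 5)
Your proposal is correct and follows essentially the same route as the paper: the quasi-periodicity $S_\psi[\lr_{\gamma_2}\psi]=\chi_{E_\psi}e^{2\pi i\gamma_2\alpha}$ on generators, the Plancherel--periodization pairing (the paper's Lemma \ref{lem:dualintegable}), the global contraction bound from the pointwise Cauchy--Schwarz inequality to pass to the closure, and Fourier uniqueness against the weight $[\psi,\psi]\in L^1((0,1])$ for surjectivity. Your Gram-matrix expansion of $\|\sum_n c_n\lr_{(0,0,n)}\psi\|^2$ is just a reorganization of the paper's direct use of the bracket's sesquilinearity, so there is no substantive difference.
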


Section \ref{sec:Riesz bases and frames} is devoted to find conditions on $\psi$ such that its left translations form a Riesz basis or a frame of the space $\psis{\psi}{\Gamma,\lr}$. Such conditions in terms of the bracket map (\ref{equ:Hbracket}) are given by the following theorems.

\begin{theorem}\label{theo:Riesz}
Let $\psi$ be in $L^2(\HD)$, $\psi \neq 0$ and assume assume that
\begin{equation}\label{equ:Condition}
% \langle \psi, \lr_{\gamma}\psi\rangle = \delta_{\gamma_1,e} \langle \psi, \lr_{\gamma_1}\psi \rangle .%\quad \textnormal{a.e.} \ \alpha \in (0,1] .
[\psi, \lr_{\gamma_1}\psi] (\alpha) = 0 \quad \textnormal{a.e.} \ \alpha \in E_\psi, \ \forall \ \gamma_1\in \Gamma_1 \setminus \{0\} .
\end{equation}
Then for $0 < A \leq B < \infty$ the following are equivalent
\begin{itemize}
\item[a)] $\{\lr_\gamma\psi\}_{\gamma \in \Gamma}$ is a Riesz basis for $\psis{\psi}{\Gamma,\lr}$ with constants $A$ and $B$
\item[b)] $A \leq [\psi,\psi](\alpha) \leq B$ for a.e. $\alpha \in (0,1]$.
\end{itemize}
\end{theorem}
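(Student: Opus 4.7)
The plan is to reduce the Riesz basis inequalities for finite combinations $\varphi = \sum_{\gamma \in F} c_\gamma \lr_\gamma \psi$ to a weighted integral inequality on $(0,1]$, by combining Plancherel on $\HD$ with the fibration underlying the bracket map \eqref{equ:Hbracket}. Splitting $\gamma = \gamma_1\gamma_2 \in \Gamma_1\Gamma_2$ and writing $\lambda = \alpha+j$ with $\alpha \in (0,1]$, $j \in \Z$, Plancherel yields $\|\varphi\|^2 = \int_0^1 [\varphi,\varphi](\alpha)\, d\alpha$, and bilinearity reduces the problem to the cross bracket $[\lr_g\psi, \lr_{g'}\psi](\alpha)$ for $g=\gamma_1\gamma_2$, $g'=\gamma_1'\gamma_2'$.

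For this key computation I use the covariance $\F_\HD \lr_g\psi(\lambda) = \uir{\lambda}(g)\F_\HD\psi(\lambda)$, which rewrites the $j$-th summand as $\operatorname{tr}\bigl(\F_\HD\psi(\alpha+j)^*\uir{\alpha+j}(g'^{-1}g)\F_\HD\psi(\alpha+j)\bigr)|\alpha+j|^d$. Centrality of $\Gamma_2$ permits the factorisation $g'^{-1}g=\tilde\gamma_1\cdot(0,0,n)$ with $\tilde\gamma_1\in\Gamma_1$ (the $\Gamma_1$-part of $\gamma_1'^{-1}\gamma_1$) and $n\in\Z$ absorbing both $\gamma_2-\gamma_2'$ and the commutator residue produced by the condition $\A\B^t\in\Z$. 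Since $\uir{\alpha+j}(0,0,n)=e^{2\pi i(\alpha+j)n}\,I$ and $jn\in\Z$, the phases collapse to $e^{2\pi i\alpha n}$, giving
\begin{equation*}
[\lr_g\psi, \lr_{g'}\psi](\alpha) = e^{2\pi i\alpha n}\,[\lr_{\tilde\gamma_1}\psi, \psi](\alpha).
\end{equation*}
When $\gamma_1\neq\gamma_1'$ one has $\tilde\gamma_1\in\Gamma_1\setminus\{0\}$; hypothesis \eqref{equ:Condition} (Hermitian symmetric in $\varphi,\psi$) forces the right-hand side to vanish on $E_\psi$, while the Cauchy--Schwarz inequality for the bracket together with the invariance $[\lr_{\tilde\gamma_1}\psi,\lr_{\tilde\gamma_1}\psi]=[\psi,\psi]$ (both expected among the basic properties proved in Section \ref{sec:HDbracket}) forces it to vanish on the complement as well. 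When $\gamma_1=\gamma_1'$, $\tilde\gamma_1=0$ and $n=\gamma_2-\gamma_2'$, so the bracket reduces to $e^{2\pi i\alpha(\gamma_2-\gamma_2')}[\psi,\psi](\alpha)$.

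Setting $f_{\gamma_1}(\alpha) \doteq \sum_{\gamma_2} c_{\gamma_1,\gamma_2} e^{2\pi i\alpha\gamma_2}$ and summing the surviving $\gamma_1$-diagonal blocks, I obtain
\begin{equation*}
\|\varphi\|^2 = \int_0^1 [\psi,\psi](\alpha)\sum_{\gamma_1}|f_{\gamma_1}(\alpha)|^2\,d\alpha, \qquad \sum_{\gamma_1,\gamma_2}|c_{\gamma_1,\gamma_2}|^2 = \int_0^1\sum_{\gamma_1}|f_{\gamma_1}(\alpha)|^2\,d\alpha,
\end{equation*}
the second identity being Parseval on the torus. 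The implication (b)$\Rightarrow$(a) is then immediate. For (a)$\Rightarrow$(b), I restrict to sequences supported on $\gamma_1=0$, letting $f_0$ range over trigonometric polynomials (and then over all of $L^2(0,1]$ by density, or testing with characteristic functions); the integral inequalities then promote to the pointwise bounds $A\leq[\psi,\psi](\alpha)\leq B$ a.e.

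The main obstacle is the cross-bracket computation in the second paragraph, where the non-commutativity of $\HD$ produces the central correction $n$; organising the identity so that only the $\Gamma_1$-part $\tilde\gamma_1$ decides whether the term survives is precisely what makes \eqref{equ:Condition} the natural hypothesis. Extending the vanishing of $[\lr_{\tilde\gamma_1}\psi,\psi]$ from $E_\psi$ to its complement is a small but genuine technicality, cleanly handled once the bracket Cauchy--Schwarz inequality is available.
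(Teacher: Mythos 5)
Your argument is correct, and it reaches the theorem by a genuinely more direct route than the paper. The paper factors the proof through three intermediate results: the orthogonal decomposition $\psis{\psi}{\Gamma,\lr}=\bigoplus_{\gamma_1}\psis{\lr_{\gamma_1}\psi}{\Gamma_2,\lr}$ (Lemma \ref{orthogonality}), the reduction of the Riesz property to the sublattice $\Gamma_2$ (Proposition \ref{prop:consequences}), and the isometry $S_\psi$ of Theorem \ref{theo:Isometry}, which is then combined with Lemma \ref{finite-sum} to identify $\|\sum_{\gamma_2}c_{\gamma_2}\lr_{\gamma_2}\psi\|^2$ with $\int_0^1|\sum c_{\gamma_2}e^{2\pi i\gamma_2\alpha}|^2[\psi,\psi]\,d\alpha$. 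You instead compute the full Gram identity $\|\sum_\gamma c_\gamma\lr_\gamma\psi\|^2=\int_0^1[\psi,\psi](\alpha)\sum_{\gamma_1}|f_{\gamma_1}(\alpha)|^2\,d\alpha$ in one stroke from the cross-bracket; your factorisation $g'^{-1}g=\tilde\gamma_1(0,0,n)$ and the phase collapse are exactly Lemma \ref{lem:unitaryhom}, your extension of the vanishing of $[\lr_{\tilde\gamma_1}\psi,\psi]$ from $E_\psi$ to its complement via Lemma \ref{lem:CS} is the observation opening the proof of Proposition \ref{prop:Condition}, and the surviving diagonal blocks are in substance the finite-sum case of Theorem \ref{theo:Isometry} combined with \eqref{4c}. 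What the paper's modular route buys is reuse: Theorem \ref{theo:Isometry} (in particular its surjectivity) and Proposition \ref{prop:consequences} are needed again for Theorem \ref{theo:frames}, where your Gram computation would not suffice because frames are tested against arbitrary $\varphi$ in the closed span, not against finite combinations. What your route buys is brevity and the absence of any appeal to the isometry theorem. Two routine points deserve a sentence in a polished write-up: \eqref{4a} is established for finitely supported sequences and must be extended to $\ell_2(\Gamma)$ by continuity of the synthesis operator (this also yields the basis property the paper verifies separately); and since $[\psi,\psi]$ is a priori only in $L^1((0,1])$, promoting $A\int|f_0|^2\le\int[\psi,\psi]|f_0|^2\le B\int|f_0|^2$ over trigonometric polynomials to pointwise bounds is not a bare density argument in $L^2$ --- use Fej\'er kernels at Lebesgue points of $[\psi,\psi]$, or first extend the identity to $\ell_2$ sums and test with $\chi_E$ as the paper does.
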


\newpage

\begin{theorem}\label{theo:frames}
Let $\psi$ be in $L^2(\HD)$, $\psi \neq 0$ and assume (\ref{equ:Condition}). Then for $0 < A \leq B < \infty$
the following are equivalent
\begin{itemize}
\item[a)] $\{\lr_\gamma\psi\}_{\gamma \in \Gamma}$ is a frame for $\psis{\psi}{\Gamma,\lr}$ with constants $A$ and $B$
\item[b)] $A \leq [\psi,\psi](\alpha) \leq B$ for a.e. $\alpha \in E_\psi$.
\end{itemize}
\end{theorem}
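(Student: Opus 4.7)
The plan is to mirror the architecture of the proof of Theorem~\ref{theo:Riesz}, using the decomposition $\Gamma=\Gamma_1\Gamma_2$ together with the isometry of Theorem~\ref{theo:Isometry} to reduce the frame inequality
\[
A\|\varphi\|^{2}\;\le\;\sum_{\gamma\in\Gamma}|\langle\varphi,\lr_\gamma\psi\rangle|^{2}\;\le\;B\|\varphi\|^{2},\qquad \varphi\in\psis{\psi}{\Gamma,\lr},
\]
to the pointwise inequality $A\le[\psi,\psi](\alpha)\le B$ a.e.\ on $E_\psi$. The new feature compared with the Riesz case is that here $\varphi$ is an arbitrary element of the shift-invariant subspace rather than a finite linear combination of the generators, so the whole analysis has to be carried out through $S_\psi$ on each center orbit instead of on coefficient sequences, and the equivalence now sees only the set $E_\psi$.

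First I would show that, under hypothesis (\ref{equ:Condition}), the family $\{\lr_\gamma\psi\}_{\gamma\in\Gamma}$ splits into mutually orthogonal center orbits: for $\gamma_1\neq\gamma_1'\in\Gamma_1$ and arbitrary $\gamma_2,\gamma_2'\in\Gamma_2$, the inner product $\langle\lr_{\gamma_1\gamma_2}\psi,\lr_{\gamma_1'\gamma_2'}\psi\rangle$ equals $\langle\psi,\lr_{\tilde\gamma_1\tilde\gamma_2}\psi\rangle$ for some $\tilde\gamma_1\in\Gamma_1\setminus\{0\}$ and $\tilde\gamma_2\in\Gamma_2$, by (\ref{equ:Hlaw}) and the centrality of $\Gamma_2$. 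Writing the latter as a Fourier coefficient on $(0,1]$ of $[\psi,\lr_{\tilde\gamma_1}\psi]$ via the dual integrability identity for the subgroup $\Gamma_2$, and noting that the bracket also vanishes off $E_\psi$ (because $[\psi,\psi](\alpha)=0$ forces $\F_\HD\psi(\alpha+j)=0$ for a.e.\ $j$), the assumption (\ref{equ:Condition}) makes the integrand identically zero. Hence $\psis{\psi}{\Gamma,\lr}=\bigoplus_{\gamma_1\in\Gamma_1}\lr_{\gamma_1}\psis{\psi}{\Gamma_2,\lr}$ as an orthogonal Hilbert sum, and any $\varphi\in\psis{\psi}{\Gamma,\lr}$ decomposes uniquely as $\varphi=\sum_{\gamma_1}\lr_{\gamma_1}\tilde\varphi_{\gamma_1}$ with $\tilde\varphi_{\gamma_1}\in\psis{\psi}{\Gamma_2,\lr}$ and $\|\varphi\|^{2}=\sum_{\gamma_1}\|\tilde\varphi_{\gamma_1}\|^{2}$.

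For each $\gamma_1$, the identity $\langle\varphi,\lr_{\gamma_1\gamma_2}\psi\rangle=\langle\tilde\varphi_{\gamma_1},\lr_{\gamma_2}\psi\rangle$ together with Parseval on $(0,1]$ applied to the dual integrability formula $\langle\tilde\varphi_{\gamma_1},\lr_{\gamma_2}\psi\rangle=\int_0^1[\tilde\varphi_{\gamma_1},\psi](\alpha)\overline{e^{2\pi i\gamma_2\alpha}}\,d\alpha$ yields $\sum_{\gamma_2}|\langle\varphi,\lr_{\gamma_1\gamma_2}\psi\rangle|^{2}=\int_0^1|[\tilde\varphi_{\gamma_1},\psi](\alpha)|^{2}\,d\alpha$. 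Substituting $[\tilde\varphi_{\gamma_1},\psi]=\chi_{E_\psi}\,S_\psi\tilde\varphi_{\gamma_1}\cdot[\psi,\psi]$, which follows from (\ref{equ:Sfunctional}) and the vanishing of the bracket outside $E_\psi$, and using Theorem~\ref{theo:Isometry} to write $\|\tilde\varphi_{\gamma_1}\|^{2}=\int_{E_\psi}|S_\psi\tilde\varphi_{\gamma_1}|^{2}[\psi,\psi]\,d\alpha$, summation over $\gamma_1$ with $F(\alpha)\doteq\sum_{\gamma_1}|S_\psi\tilde\varphi_{\gamma_1}(\alpha)|^{2}$ reduces the frame inequality to
\[
A\int_{E_\psi}F(\alpha)[\psi,\psi](\alpha)\,d\alpha\;\le\;\int_{E_\psi}F(\alpha)[\psi,\psi](\alpha)^{2}\,d\alpha\;\le\;B\int_{E_\psi}F(\alpha)[\psi,\psi](\alpha)\,d\alpha,
\]
required to hold for every $\varphi\in\psis{\psi}{\Gamma,\lr}$.

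Finally, since the surjectivity part of Theorem~\ref{theo:Isometry} allows one to realize $F=|g|^{2}$ for arbitrary $g\in L^{2}(E_\psi,[\psi,\psi]d\alpha)$, by choosing $\tilde\varphi_0=S_\psi^{-1}g$ and $\tilde\varphi_{\gamma_1}=0$ for $\gamma_1\neq 0$, testing against characteristic functions of level sets of $[\psi,\psi]$ gives the desired equivalence with $A\le[\psi,\psi](\alpha)\le B$ a.e.\ on $E_\psi$. The main delicate point I expect is the Parseval step when the bracket is a priori only in $L^{1}(0,1]$: this is handled by observing that any finite Bessel sum forces $[\tilde\varphi_{\gamma_1},\psi]\in L^{2}(0,1]$, so Parseval applies in each direction of the equivalence, and the frame bounds propagate through the orthogonal sum on $\Gamma_1$ without any delicate interchange.
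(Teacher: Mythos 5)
Your proposal is correct and follows essentially the same route as the paper: condition (\ref{equ:Condition}) yields the orthogonal decomposition of $\psis{\psi}{\Gamma,\lr}$ into $\Gamma_2$-orbits (the paper's Lemma \ref{orthogonality} and Proposition \ref{prop:consequences}), Parseval turns the $\gamma_2$-sum into $\int_0^1|S_\psi[\varphi]|^2[\psi,\psi]^2\,d\alpha$, and the surjectivity of $S_\psi$ from Theorem \ref{theo:Isometry} gives the converse by testing with (pre-images of) characteristic functions of level sets, exactly as in the paper's contradiction argument. The only difference is organizational -- the paper isolates the reduction to $\Gamma_2$ as a separate proposition while you carry the $\gamma_1$-sum through explicitly -- and your worry about Parseval is resolved just as you suggest, since in each direction the relevant bracket is in $L^2((0,1])$.
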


In Section \ref{sec:Applications} we will then provide a construction of frames for cyclic subspaces, based on Theorem \ref{theo:frames}, while Section \ref{sec:SIZ} is devoted to show how the presented results can be generalized to the more abstract setting of $SI/Z$ groups.

{\bf Acknowledgements}. This work started when the second and third
authors participated in the \textit{Concentration week on Frame Theory and Maps bewteen Operator Algebras} held on July
16-22, {{2012,}} at Texas A\&M University. We
 express our gratitude to the Organizing Committee for
the invitation to participate in this meeting. 
\medskip

\section{Properties of the bracket map on the Heisenberg group}\label{sec:HDbracket}

We denote by $\HD\equiv \R^d\times \R^d\times \R$ the $d$-dimensional Heisenberg group, with (non-commutative) composition law given by (\ref{equ:Hlaw}), also addressed as \emph{polarized} Heisenberg group \cite{F1995}. Note that with this law, the inverse of an element is $(p,q,t)^{-1}=(-p,-q,-t+p\cdot q)$.

The Haar measure of $\HD$ is the Lebesgue measure on $\R^{2d+1}$, and the center of $\HD$ is the subgroup $Z=\R$ of all elements $(0,0,t)$, so any $x= (p,q,t)$ can be written as $x = (p,q,0)(0,0,t)  = (0,0,t)(p,q,0)$.

The class of non-zero measure irreducible representations of the Heisenberg group needed to build up the Plancherel theorem are identified by $\R^*=\R\setminus\{0\}$. Then $\widehat{\HD}=\R^*$ up to a zero measure set. For any  $\lambda\in \R^*$ and any $x=(p,q,t)\in \HD$, such representations are equivalent to Schr\"odinger representations on the Hilbert space $L^2(\R^d)$, that we denote with the map $\uir{\lambda}(x)$
\begin{equation}\label{definition-of-schroedinger-representation}
\uir{\lambda}(x)f(y)= \uir{\lambda}(p,q,t)f(y)= e^{2\pi i \lambda t} e^{-2\pi i  \lambda q \cdot y} f(y-p)= e^{2\pi i \lambda t} \uir{\lambda}(p,q,0) f(y)
\end{equation}
on $f\in L^2(\R^d)$, where $\uir{\lambda}(p,q,0) = M_{\lambda q} T_{p} $ is the shift-frequency operator on $L^2(\R^d)$.

For $\varphi\in L^2(\HD)$ the Hilbert-Schmidt operator-valued Fourier transform $\F_\HD\varphi$ is given by
\begin{displaymath}
\F_\HD\varphi(\lambda) = \int_{\HD} \varphi(x) \uir{\lambda}(x) dx \ , \quad \lambda \in \R^* .
\end{displaymath}

It is well known (see e.g. \cite{F1995}) that the Plancherel measure on $\widehat\HD$ is given by $|\lambda|^d d\lambda$, so that Plancherel theorem reads
\begin{equation}\label{eq:Plancherel}
\langle \varphi, \psi\rangle_{L^2(\HD)} = \int_\R \langle \F_\HD \varphi(\lambda), \F_\HD \psi(\lambda)\rangle_{\HS} |\lambda|^d d\lambda
\end{equation}
where
\begin{displaymath}
\langle \F_\HD \varphi(\lambda), \F_\HD \psi(\lambda)\rangle_{\HS} = \textnormal{trace}_{L^2(\R^d)} \left(\F_\HD \varphi (\lambda) \left(\F_\HD \psi(\lambda)\right)^\dag\right)
\end{displaymath}
and $\left(\F_\HD \psi(\lambda)\right)^\dag$ stands for the $L^2(\R^d)$ adjoint of $\F_\HD \psi(\lambda)$.
\newpage
We will now prove some key properties of the bracket map of Definition \ref{def:HDbracket}.

\begin{lemma}\label{lem:sesquilinear}
The map (\ref{equ:Hbracket}) is a sesquilinear map from $L^2(\HD)\times L^2(\HD)$ to $L^1((0,1])$.
\end{lemma}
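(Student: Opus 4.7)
The plan is to establish two things: (i) sesquilinearity of $[\cdot,\cdot]$ as a formal map into functions on $(0,1]$, and (ii) the fact that the series defining $[\varphi,\psi](\alpha)$ converges absolutely for a.e.\ $\alpha$ and produces an element of $L^1((0,1])$, with the quantitative bound $\|[\varphi,\psi]\|_{L^1((0,1])} \leq \|\varphi\|_{L^2(\HD)}\|\psi\|_{L^2(\HD)}$.

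Part (i) is essentially free: the operator-valued Fourier transform $\F_\HD$ is linear in its argument, and the Hilbert--Schmidt pairing $\langle\cdot,\cdot\rangle_{\HS}$ is linear in the first entry and conjugate-linear in the second. Since the weight $|\alpha+j|^d$ and the summation over $j$ are real-linear operations, sesquilinearity transfers termwise to the sum.

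Part (ii) is the main content. I would first bound pointwise by Cauchy--Schwarz on $\HS$,
\begin{equation*}
|[\varphi,\psi](\alpha)| \leq \sum_{j \in \Z} \|\F_\HD\varphi(\alpha+j)\|_{\HS} \|\F_\HD\psi(\alpha+j)\|_{\HS} |\alpha+j|^d,
\end{equation*}
then split the weight as $|\alpha+j|^d = |\alpha+j|^{d/2} \cdot |\alpha+j|^{d/2}$ and apply Cauchy--Schwarz in $\ell^2(\Z)$ followed by Cauchy--Schwarz in $L^2((0,1],d\alpha)$ after integrating. This yields
\begin{equation*}
\int_0^1 |[\varphi,\psi](\alpha)|\, d\alpha \leq \left(\int_0^1 \sum_{j \in \Z} \|\F_\HD\varphi(\alpha+j)\|_{\HS}^2 |\alpha+j|^d \, d\alpha\right)^{1/2} \left(\int_0^1 \sum_{j \in \Z} \|\F_\HD\psi(\alpha+j)\|_{\HS}^2 |\alpha+j|^d \, d\alpha\right)^{1/2}.
\end{equation*}
Each factor is, by the tiling of $\R$ by intervals $(j, j+1]$ and Tonelli's theorem, equal to $\int_\R \|\F_\HD\varphi(\lambda)\|_{\HS}^2 |\lambda|^d\, d\lambda$, which is $\|\varphi\|_{L^2(\HD)}^2$ by the Plancherel identity (\ref{eq:Plancherel}) taken with $\psi = \varphi$. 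The analogous identity for $\psi$ gives the announced bound, which simultaneously guarantees absolute convergence of the defining series for a.e.\ $\alpha \in (0,1]$.

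No real obstacle arises here; the only mildly delicate point is the use of Tonelli to interchange the integral over $(0,1]$ with the sum over $j$, which is legitimate because the integrand is nonnegative. Once this is in place, sesquilinearity combined with the quantitative $L^1$ bound yields that $[\cdot,\cdot]$ is a sesquilinear map $L^2(\HD)\times L^2(\HD) \to L^1((0,1])$, as claimed.
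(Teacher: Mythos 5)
Your proposal is correct and follows essentially the same route as the paper: Cauchy--Schwarz on the Hilbert--Schmidt pairing, then Cauchy--Schwarz over the sum and integral, then unfolding the periodization and invoking Plancherel to get the bound $\|[\varphi,\psi]\|_{L^1((0,1])} \leq \|\varphi\|_{L^2(\HD)}\|\psi\|_{L^2(\HD)}$. Your explicit appeal to Tonelli is a minor point of extra care that the paper leaves implicit.
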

\begin{proof}
The sesquilinearity follows from the linearity of the Fourier transform and the sesquilinearity of the Hilbert-Schmidt scalar product. To show that $[\varphi,\psi] \in L^1((0,1])$ we show that
$$
\int_0^1 |[\varphi,\psi](\alpha)| d\alpha \leq \|\varphi\|_{L^2(\HD)} \|\psi\|_{L^2(\HD)}.
$$
Indeed
\begin{eqnarray*}
\int_0^1 |[\varphi,\psi](\alpha)| d\alpha  & = & \int_0^1 |\sum_{j \in \Z} \langle \F_\HD \varphi(\alpha + j), \F_\HD \psi(\alpha + j)\rangle_{\HS}|\, |\alpha + j|^d d\alpha \\
& \hspace{-120pt} \leq & \hspace{-65pt}\int_0^1 \sum_{j \in \Z} \|\F_\HD \varphi(\alpha + j)\|_{\HS} \, \|\F_\HD \psi(\alpha + j)\|_{\HS} \, |\alpha + j|^d d\alpha \\
& \hspace{-120pt} \leq & \hspace{-65pt} \left(\int_0^1 \sum_{j \in \Z} \|\F_\HD \varphi(\alpha + j)\|^2_{\HS}|\alpha + j|^d d\alpha\right)^\frac12 \left(\int_0^1 \sum_{j \in \Z} \|\F_\HD \psi(\alpha + j)\|^2_{\HS} \, |\alpha + j|^d d\alpha \right)^\frac12 \\
& \hspace{-120pt} = & \hspace{-65pt} \left(\int_\R \|\F_\HD \varphi(\lambda)\|^2_{\HS}|\lambda|^d d\lambda\right)^\frac12 \left(\int_\R \|\F_\HD \psi(\lambda)\|^2_{\HS} \, |\lambda|^d d\lambda \right)^\frac12
\end{eqnarray*}
% \begin{eqnarray*}
% \int_0^1 |[\varphi,\psi](\alpha)| d\alpha  & = & \int_0^1 |\sum_{j \in \Z} \langle \F_\HD \varphi(\alpha + j), \F_\HD \psi(\alpha + j)\rangle_{\HS}|\, |\alpha + j|^d d\alpha \\
% & \leq & \int_0^1 \sum_{j \in \Z} \|\F_\HD \varphi(\alpha + j)\|_{\HS} \, \|\F_\HD \psi(\alpha + j)\|_{\HS} \, |\alpha + j|^d d\alpha
% \end{eqnarray*}
% \begin{eqnarray*}
% & \leq & \left(\int_0^1 \sum_{j \in \Z} \|\F_\HD \varphi(\alpha + j)\|^2_{\HS}|\alpha + j|^d d\alpha\right)^\frac12 \left(\int_0^1 \sum_{j \in \Z} \|\F_\HD \psi(\alpha + j)\|^2_{\HS} \, |\alpha + j|^d d\alpha \right)^\frac12 \\
% & = & \left(\int_\R \|\F_\HD \varphi(\lambda)\|^2_{\HS}|\lambda|^d d\lambda\right)^\frac12 \left(\int_\R \|\F_\HD \psi(\lambda)\|^2_{\HS} \, |\lambda|^d d\lambda \right)^\frac12
% \end{eqnarray*}
and by Plancherel theorem we can conclude.
\end{proof}

\begin{lemma}\label{lem:CS}
For any $\varphi, \psi \in L^2(\HD)$ we have
$$
|[\varphi,\psi](\alpha)|^2 \leq [\varphi,\varphi](\alpha) [\psi,\psi](\alpha) \quad \textnormal{a.e.} \ \alpha \in (0,1] .
$$
\end{lemma}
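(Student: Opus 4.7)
The plan is to recognize the bracket $[\varphi,\psi](\alpha)$, for each fixed $\alpha$, as the inner product of two sequences in a weighted Hilbert space of Hilbert--Schmidt operators, and then appeal to Cauchy--Schwarz in that space.

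First, I would fix $\alpha \in (0,1]$ and consider the weighted sequence space
\[
\ell^2_\alpha \;=\; \Big\{(A_j)_{j\in\Z}\subset \HS \;:\; \sum_{j\in\Z} \|A_j\|_{\HS}^{2}\,|\alpha+j|^d <\infty\Big\}
\]
with inner product $\langle (A_j),(B_j)\rangle_\alpha = \sum_{j\in\Z}\langle A_j,B_j\rangle_{\HS}|\alpha+j|^d$. Under this identification, setting $A_j(\alpha)=\F_\HD\varphi(\alpha+j)$ and $B_j(\alpha)=\F_\HD\psi(\alpha+j)$, the definition in (\ref{equ:Hbracket}) becomes $[\varphi,\psi](\alpha) = \langle (A_j(\alpha)),(B_j(\alpha))\rangle_\alpha$, and similarly $[\varphi,\varphi](\alpha) = \|(A_j(\alpha))\|_\alpha^2$ and $[\psi,\psi](\alpha) = \|(B_j(\alpha))\|_\alpha^2$.

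Second, I would verify that, for a.e.\ $\alpha\in(0,1]$, the sequences $(A_j(\alpha))$ and $(B_j(\alpha))$ actually lie in $\ell^2_\alpha$. This is immediate from the Plancherel identity (\ref{eq:Plancherel}) and Fubini--Tonelli: unfolding $\int_\R$ as $\sum_{j\in\Z}\int_0^1$,
\[
\int_0^1 \Big(\sum_{j\in\Z}\|\F_\HD\varphi(\alpha+j)\|^2_{\HS}|\alpha+j|^d\Big)\,d\alpha \;=\;\|\varphi\|^2_{L^2(\HD)}<\infty,
\]
so the integrand is finite a.e.; the analogous statement holds for $\psi$. Hence outside a null set both sequences belong to $\ell^2_\alpha$.

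Third, for any such $\alpha$ the Cauchy--Schwarz inequality in $\ell^2_\alpha$ yields
\[
|[\varphi,\psi](\alpha)|^2 = |\langle (A_j(\alpha)),(B_j(\alpha))\rangle_\alpha|^2 \leq \|(A_j(\alpha))\|_\alpha^2\,\|(B_j(\alpha))\|_\alpha^2 = [\varphi,\varphi](\alpha)\,[\psi,\psi](\alpha),
\]
which is precisely the claim. I expect no serious obstacle here; the only point that needs some care is handling the null set where $(A_j(\alpha))$ or $(B_j(\alpha))$ may fail to be in $\ell^2_\alpha$, but this is dispatched by the Plancherel/Fubini argument above. Alternatively, one can avoid the abstract weighted space and argue directly termwise, using $|\langle A_j,B_j\rangle_{\HS}|\leq \|A_j\|_{\HS}\|B_j\|_{\HS}$ together with the scalar Cauchy--Schwarz inequality applied after splitting the weight as $|\alpha+j|^d = |\alpha+j|^{d/2}|\alpha+j|^{d/2}$.
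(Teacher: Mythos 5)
Your proposal is correct and is essentially the paper's argument: the authors likewise prove the bound by a direct Cauchy--Schwarz estimate, bounding $|\langle \F_\HD\varphi(\alpha+j),\F_\HD\psi(\alpha+j)\rangle_{\HS}|$ by the product of Hilbert--Schmidt norms and then applying Cauchy--Schwarz to the weighted sum, which is exactly your ``termwise'' alternative and is equivalent to your weighted-$\ell^2$ packaging. Your extra Plancherel/Fubini remark justifying a.e.\ finiteness of the sums is a small refinement the paper leaves implicit, but it does not change the route.
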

\begin{proof}
By Cauchy-Schwartz inequality
\begin{eqnarray*}
|[\varphi,\psi](\alpha)|^2 & \leq & \left(\sum_{j \in \Z} \|\F_\HD \varphi(\alpha + j)\|^2_{\HS}|\alpha + j|^d \right) \left( \sum_{j \in \Z} \|\F_\HD \psi(\alpha + j)\|^2_{\HS} \, |\alpha + j|^d \right) \\
& = & [\varphi,\varphi](\alpha) [\psi,\psi](\alpha) .
\end{eqnarray*}
\end{proof}

The following lemma provides two properties of the bracket map that will be intensively used.
\begin{lemma}\label{lem:unitaryhom}
Let $\varphi, \psi$ be in $L^2(\HD)$. Then
\begin{itemize}
\item[i)] $[\lr_{x}\varphi, \lr_{x'}\psi](\alpha) = [\varphi, \lr_{x^{-1}x'}\psi](\alpha)$ for all $x,x' \in \HD$\\
\item[ii)] $[\varphi,\lr_{\gamma_2}\psi](\alpha) = e^{-2\pi i \gamma_2 \alpha} [\varphi,\psi](\alpha)$ for a.e. $\alpha \in (0,1]$ and all $\gamma_2 \in \Gamma_2$.
\end{itemize}
\end{lemma}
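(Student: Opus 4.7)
Both identities will be reduced to a single covariance formula for the operator-valued Fourier transform, namely
\[
\F_\HD(\lr_x \varphi)(\lambda) = \uir{\lambda}(x)\, \F_\HD\varphi(\lambda),\qquad x \in \HD,\ \lambda \in \R^*.
\]
The first step is to establish this: I would substitute $y=xz$ in $\int_\HD \varphi(x^{-1}y)\uir{\lambda}(y)\,dy$, use left-invariance of Haar measure on $\HD$, and factor $\uir{\lambda}(xz)=\uir{\lambda}(x)\uir{\lambda}(z)$ to pull $\uir{\lambda}(x)$ outside the integral. This is the only genuinely representation-theoretic input; everything else is trace manipulation.

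For part (i), I would expand $\langle \F_\HD(\lr_x\varphi)(\lambda),\F_\HD(\lr_{x'}\psi)(\lambda)\rangle_{\HS}$ as a trace, apply the covariance formula on both sides to get
\[
\mathrm{tr}\bigl(\uir{\lambda}(x)\F_\HD\varphi(\lambda)\,\F_\HD\psi(\lambda)^\dag\,\uir{\lambda}(x')^\dag\bigr),
\]
then use cyclicity of the trace and unitarity of $\uir{\lambda}$, which gives $\uir{\lambda}(x')^\dag \uir{\lambda}(x)=\uir{\lambda}(x'^{-1}x)=\uir{\lambda}((x^{-1}x')^{-1})$. Comparing with the analogous expansion of $\langle \F_\HD\varphi(\lambda),\F_\HD(\lr_{x^{-1}x'}\psi)(\lambda)\rangle_{\HS}$ shows the two expressions agree pointwise in $\lambda$. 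Summing over $j\in\Z$ at $\lambda=\alpha+j$ with weight $|\alpha+j|^d$ yields the identity.

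For part (ii), I would specialize the covariance formula to $x=\gamma_2=(0,0,\gamma_2)\in\Gamma_2$. Since $\uir{\lambda}(0,0,\gamma_2)=e^{2\pi i\lambda\gamma_2}I$ by (\ref{definition-of-schroedinger-representation}), we get $\F_\HD(\lr_{\gamma_2}\psi)(\lambda)=e^{2\pi i\lambda\gamma_2}\F_\HD\psi(\lambda)$, so that
\[
\langle \F_\HD\varphi(\lambda),\F_\HD(\lr_{\gamma_2}\psi)(\lambda)\rangle_{\HS}=e^{-2\pi i\lambda\gamma_2}\langle \F_\HD\varphi(\lambda),\F_\HD\psi(\lambda)\rangle_{\HS}.
\]
Evaluating at $\lambda=\alpha+j$ and using $\gamma_2\in\Z$ so that $e^{-2\pi i j\gamma_2}=1$, the factor reduces to $e^{-2\pi i\alpha\gamma_2}$, which is independent of $j$ and can be pulled out of the sum defining $[\varphi,\lr_{\gamma_2}\psi](\alpha)$.

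\textbf{Main obstacle.} The only non-routine point is the covariance formula and in particular checking that $\uir{\lambda}(x)$ acts on the \emph{left} of $\F_\HD\varphi(\lambda)$ (which is consistent with left translation); getting the order right matters because it controls which factor emerges in the trace in part (i) and therefore whether one ends up with $\uir{\lambda}(x^{-1}x')$ or its adjoint. Once that is correctly set up, the rest is a short calculation.
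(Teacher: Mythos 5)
Your proposal is correct and follows essentially the same route as the paper: the key input in both is the covariance relation $\F_\HD(\lr_x\varphi)(\lambda)=\uir{\lambda}(x)\F_\HD\varphi(\lambda)$ (which the paper simply cites from F\"uhr rather than deriving), after which part (i) is the unitarity of $\uir{\lambda}(x)$ --- moved across the $\HS$ inner product in the paper, handled via trace cyclicity in your version, which is the same computation --- and part (ii) is the specialization $\uir{\lambda}(\gamma_2)=e^{2\pi i\gamma_2\lambda}I$ together with $e^{-2\pi i j\gamma_2}=1$ for $j,\gamma_2\in\Z$. Your attention to the side on which $\uir{\lambda}(x)$ acts is exactly the right point to be careful about, and your handling of it is consistent with the paper's convention $\langle A,B\rangle_{\HS}=\mathrm{tr}(AB^\dag)$.
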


\begin{proof}
To prove $i)$, note first that for all $x \in \HD$ (see e.g. \cite{Fuhr})
% for all $\Gamma \ni \gamma = \gamma_1\gamma_2$ with $\gamma_i \in \Gamma_i$, $i = 1, 2$ we have
\begin{equation}\label{equ:exchange}
\F_\HD \lr_x \psi (\lambda) = \uir{\lambda}(x) \F_\HD \psi (\lambda) .
\end{equation}
Since the operators $\uir{\lambda}(x)$, $x \in \HD$ are unitary homomorphisms, then using (\ref{equ:exchange})
\begin{eqnarray*}
\langle \F_\HD \lr_x \varphi(\lambda), \F_\HD \lr_{x'} \psi(\lambda)\rangle_{\HS} & = & \langle \uir{\lambda}(x) \F_\HD \varphi(\lambda), \F_\HD \lr_{x'} \psi(\lambda)\rangle_{\HS}\\
& = & \langle \F_\HD \varphi(\lambda), \uir{\lambda}(x^{-1}) \F_\HD \lr_{x'} \psi(\lambda)\rangle_{\HS}\\
& = & \langle \F_\HD \varphi(\lambda), \F_\HD \lr_{x^{-1}x'} \psi(\lambda)\rangle_{\HS}
\end{eqnarray*}
so the proof follows by the definition (\ref{equ:Hbracket}) of the bracket.

To prove $ii)$ we can use the same argument, noting that $\uir{\lambda}(\gamma_2) = e^{2\pi i \gamma_2 \lambda}$ .
% \begin{equation}
% \F_\HD \lr_\gamma \psi (\lambda) = \uir{\lambda}(\gamma) \F_\HD \psi (\lambda) = \uir{\lambda}(\gamma_2) \F_\HD \lr_{\gamma_1} \psi (\lambda)
% \end{equation}
\end{proof}

Next lemma provides the key relation with the notion of dual integrability introduced in \cite{HSWW}.
\begin{lemma}\label{lem:dualintegable}
The abelian group $\Gamma_2$ is dual integrable in the sense of \cite{HSWW} on $L^2(\HD)$ with respect to the bracket (\ref{equ:Hbracket}), i.e.
for all $\gamma_2 \in \Gamma_2 = \mathbb Z$ we have
$$
\langle \varphi, \lr_{\gamma_2}\psi\rangle_{L^2(\HD)} = \int_0^1 [\varphi, \psi] e^{-2\pi i \gamma_2 \alpha} d\alpha \quad \forall \ \varphi, \psi \in L^2(\HD).
$$
\end{lemma}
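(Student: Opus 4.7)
The plan is to reduce the claim to the Plancherel identity (\ref{eq:Plancherel}) by exploiting the fact that the Schrödinger representation acts as a scalar on the center of $\HD$, and then periodize the resulting integral over $\R$ to an integral over $(0,1]$.

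First I would start from Plancherel's theorem applied to $\varphi$ and $L_{\gamma_2}\psi$:
\begin{equation*}
\langle \varphi, L_{\gamma_2}\psi\rangle_{L^2(\HD)} = \int_\R \langle \F_\HD \varphi(\lambda), \F_\HD L_{\gamma_2}\psi(\lambda)\rangle_{\HS} |\lambda|^d d\lambda .
\end{equation*}
Next, using the intertwining relation (\ref{equ:exchange}) together with the observation already used in Lemma \ref{lem:unitaryhom}$(ii)$ that $\pi_\lambda(\gamma_2) = e^{2\pi i \gamma_2 \lambda}\mathrm{Id}$ for any element $\gamma_2$ of the center $\Gamma_2 = \Z$, I get
\begin{equation*}
\F_\HD L_{\gamma_2}\psi(\lambda) = e^{2\pi i \gamma_2 \lambda}\, \F_\HD \psi(\lambda) .
\end{equation*}
Pulling the scalar out of the Hilbert–Schmidt pairing (conjugated on the right factor) gives
\begin{equation*}
\langle \varphi, L_{\gamma_2}\psi\rangle_{L^2(\HD)} = \int_\R e^{-2\pi i \gamma_2 \lambda}\langle \F_\HD \varphi(\lambda), \F_\HD \psi(\lambda)\rangle_{\HS} |\lambda|^d d\lambda .
\end{equation*}

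Then I would periodize: split $\R = \bigsqcup_{j\in\Z}(j,j+1]$ and change variables $\lambda = \alpha + j$ with $\alpha \in (0,1]$. Since $\gamma_2 \in \Z$, the factor $e^{-2\pi i \gamma_2 j} = 1$, so
\begin{equation*}
\int_\R e^{-2\pi i \gamma_2 \lambda}\langle \F_\HD \varphi(\lambda), \F_\HD \psi(\lambda)\rangle_{\HS} |\lambda|^d d\lambda
= \int_0^1 e^{-2\pi i \gamma_2 \alpha} \sum_{j \in \Z}\langle \F_\HD \varphi(\alpha+j), \F_\HD \psi(\alpha+j)\rangle_{\HS}|\alpha+j|^d d\alpha .
\end{equation*}
The inner sum is precisely $[\varphi,\psi](\alpha)$ by Definition \ref{def:HDbracket}, which gives the claim.

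The only technical point — and the sole place where care is needed — is the exchange of sum and integral used in the periodization step. This is handled by Fubini–Tonelli once one checks absolute integrability of the double series, which is exactly the computation performed in the proof of Lemma \ref{lem:sesquilinear}: the integral of $\sum_{j\in\Z} \|\F_\HD\varphi(\alpha+j)\|_{\HS}\|\F_\HD\psi(\alpha+j)\|_{\HS}|\alpha+j|^d$ over $(0,1]$ is bounded by $\|\varphi\|_{L^2(\HD)}\|\psi\|_{L^2(\HD)}$ via Cauchy–Schwarz and Plancherel. Thus no real obstacle arises beyond this routine justification.
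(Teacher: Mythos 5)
Your proposal is correct and follows essentially the same route as the paper: Plancherel, periodization of the integral over $\R$ onto $(0,1]$, and the fact that $\pi_\lambda(\gamma_2)$ acts as the scalar $e^{2\pi i \gamma_2 \lambda}$ on the center (the paper simply applies this last fact in the packaged form of Lemma \ref{lem:unitaryhom}$(ii)$ after periodizing, whereas you pull the scalar out before periodizing). Your explicit appeal to the Cauchy--Schwarz bound from Lemma \ref{lem:sesquilinear} to justify the interchange of sum and integral is a sound, if routine, addition that the paper leaves implicit.
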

\begin{proof}
By Plancherel theorem and a periodization on $\R$
\begin{align*}
\langle & \varphi, \lr_{\gamma_2}\psi\rangle_{L^2(\HD)} = \int_\R \langle \F_\HD \varphi(\lambda), \F_\HD \lr_{\gamma_2} \psi(\lambda)\rangle_{\HS} |\lambda|^d d\lambda\\
% & = & \int_\R \langle \F_\HD \varphi(\lambda), \F_\HD \psi(\lambda)\rangle_{\HS} |\lambda|^d e^{-2\pi i \gamma_2 \lambda} d\lambda\\
% & = & \int_0^1 \sum_{j \in \Z} \langle \F_\HD \varphi(\alpha + j), \F_\HD \psi(\alpha + j)\rangle_{\HS} |\alpha + j|^d e^{-2\pi i \gamma_2 \alpha} d\alpha
& = \int_0^1 \sum_{j \in \Z} \langle \F_\HD \varphi(\alpha + j), \F_\HD \lr_{\gamma_2}\psi(\alpha + j)\rangle_{\HS} |\alpha + j|^d d\alpha
= \int_0^1 [\varphi, \lr_{\gamma_2}\psi] d\alpha\ .
\end{align*}
The proof then follows by Lemma \ref{lem:unitaryhom}, $ii)$, noting that $e^{2\pi i \gamma_2 j} = 1$.
\end{proof}

As a corollary of Lemma \ref{lem:dualintegable} we have the following relationship between the left regular representation of discrete subgroups (\ref{eq:lattice}) with the bracket map (\ref{equ:Hbracket}).
\begin{corollary}
For all $\varphi, \psi \in L^2(\HD)$ and all $\gamma = \gamma_1 \gamma_2  \in \Gamma$ we have
\begin{equation}\label{equ:lrr-bracket}
\langle \varphi, \lr_{\gamma}\psi\rangle_{L^2(\HD)} = \int_0^1 [\varphi, \lr_{\gamma_1}\psi] e^{-2\pi i \gamma_2 \alpha} d\alpha .
\end{equation}
\end{corollary}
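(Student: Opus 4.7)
The plan is to derive this corollary essentially for free from Lemma \ref{lem:dualintegable}, using two ingredients: (i) the fact that $\lr$ is a group representation, so $\lr_{\gamma_1\gamma_2} = \lr_{\gamma_1}\lr_{\gamma_2}$, and (ii) the fact that elements of $\Gamma_2 = \Z$ lie in the center of $\HD$, so they commute with every $\gamma_1 \in \Gamma_1$. Concretely, writing $\gamma_1 = (p,q,0)$ and $\gamma_2 = (0,0,t)$, a direct check with the composition law (\ref{equ:Hlaw}) gives $\gamma_1\gamma_2 = (p,q,t) = \gamma_2\gamma_1$.

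Combining these, I would rewrite
\[
\lr_\gamma \psi \;=\; \lr_{\gamma_1\gamma_2}\psi \;=\; \lr_{\gamma_2\gamma_1}\psi \;=\; \lr_{\gamma_2}\bigl(\lr_{\gamma_1}\psi\bigr).
\]
Since $\lr_{\gamma_1}\psi \in L^2(\HD)$, Lemma \ref{lem:dualintegable} applies with $\psi$ replaced by $\lr_{\gamma_1}\psi$, and yields
\[
\langle \varphi, \lr_{\gamma_2}(\lr_{\gamma_1}\psi)\rangle_{L^2(\HD)} \;=\; \int_0^1 [\varphi, \lr_{\gamma_1}\psi](\alpha)\, e^{-2\pi i \gamma_2 \alpha}\, d\alpha,
\]
which is exactly (\ref{equ:lrr-bracket}).

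There is no real obstacle: the only thing to be careful about is the ordering in $\gamma = \gamma_1\gamma_2$ versus $\gamma_2\gamma_1$ when passing to $\lr_{\gamma_2}\lr_{\gamma_1}\psi$, which is resolved by the centrality of $\Gamma_2$. In particular, one does not need to re-expose the bracket through Plancherel or unitarity of $\uir{\lambda}$ (those were the substantive content of Lemma \ref{lem:dualintegable} and Lemma \ref{lem:unitaryhom}); the corollary is purely a bookkeeping consequence of the factorization $\Gamma = \Gamma_1\Gamma_2$ with $\Gamma_2$ central.
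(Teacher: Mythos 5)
Your proof is correct and is essentially the paper's own argument: the paper's one-line proof also applies Lemma \ref{lem:dualintegable} with $\psi$ replaced by $\lr_{\gamma_1}\psi$. The only difference is that you explicitly justify the identification $\lr_{\gamma_1\gamma_2}\psi = \lr_{\gamma_2}(\lr_{\gamma_1}\psi)$ via the centrality of $\Gamma_2$, a step the paper leaves implicit.
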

\begin{proof}
The proof follows by applying Lemma \ref{lem:dualintegable} to $\lr_{\gamma_1}\psi$ instead of $\psi$.
\end{proof}

We can now provide an equivalent characterization of condition (\ref{equ:Condition}), that will be used in the proofs of Theorems \ref{theo:Riesz} and \ref{theo:frames}.

\begin{proposition}\label{prop:Condition}
Let $\psi \in L^2(\HD)$. Then condition (\ref{equ:Condition}) is equivalent to
\begin{equation}\label{equ:ConditionEquiv1}
\langle \psi, \lr_{\gamma_1\gamma_2}\psi\rangle_{L^2(\HD)} = \delta_{\gamma_1,0} \langle \psi, \lr_{\gamma_2}\psi\rangle_{L^2(\HD)} \quad \forall \gamma = \gamma_1\gamma_2 \in \Gamma .
\end{equation}
% and to
% \begin{equation}\label{equ:ConditionEquiv2}
% [\lr_{\gamma_1}\psi,\lr_{\gamma_1'}\psi](\alpha) = 0 \quad \textnormal{a.e.} \ \alpha \in E_\psi \, , \quad \forall \gamma_1 \neq \gamma_1' .
% \end{equation}
\end{proposition}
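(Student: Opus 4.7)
My plan is to use the integral representation from the corollary, namely $\langle \psi, L_{\gamma}\psi\rangle_{L^2(\HD)} = \int_0^1 [\psi, L_{\gamma_1}\psi](\alpha) e^{-2\pi i \gamma_2 \alpha}\, d\alpha$ for every $\gamma = \gamma_1\gamma_2 \in \Gamma$, and to read the equivalence as a statement that recognizes the right-hand side as the $\gamma_2$-th Fourier coefficient of the $1$-periodic $L^1$ function $\alpha \mapsto [\psi, L_{\gamma_1}\psi](\alpha)$.

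For the direction $(\ref{equ:Condition}) \Rightarrow (\ref{equ:ConditionEquiv1})$, I split on $\gamma_1$. When $\gamma_1 = 0$, the identity $\langle \psi, L_{\gamma_2}\psi\rangle = \int_0^1 [\psi,\psi](\alpha) e^{-2\pi i \gamma_2\alpha}\,d\alpha$ is immediate from the corollary, matching the right-hand side of (\ref{equ:ConditionEquiv1}). When $\gamma_1 \neq 0$, I want to conclude that the integral of $[\psi, L_{\gamma_1}\psi](\alpha) e^{-2\pi i \gamma_2\alpha}$ over $(0,1]$ is zero; hypothesis (\ref{equ:Condition}) only gives the vanishing of $[\psi,L_{\gamma_1}\psi]$ on $E_\psi$, so the one small point is that I also need to kill the integrand on $(0,1]\setminus E_\psi$. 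This follows from the Cauchy--Schwarz-type estimate in Lemma \ref{lem:CS} combined with Lemma \ref{lem:unitaryhom}$(i)$, which gives $[L_{\gamma_1}\psi, L_{\gamma_1}\psi] = [\psi,\psi]$, so that
\[
|[\psi, L_{\gamma_1}\psi](\alpha)|^2 \le [\psi,\psi](\alpha)\,[L_{\gamma_1}\psi, L_{\gamma_1}\psi](\alpha) = [\psi,\psi](\alpha)^2,
\]
which vanishes off $E_\psi$ by definition of $E_\psi$.

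For the converse $(\ref{equ:ConditionEquiv1}) \Rightarrow (\ref{equ:Condition})$, I fix $\gamma_1 \in \Gamma_1 \setminus \{0\}$ and let $\gamma_2 = n$ range over $\Gamma_2 = \Z$. By (\ref{equ:ConditionEquiv1}) every inner product $\langle \psi, L_{\gamma_1 n}\psi\rangle$ vanishes, and the corollary then says that every Fourier coefficient of the $L^1((0,1])$ function $\alpha \mapsto [\psi, L_{\gamma_1}\psi](\alpha)$ is zero. By uniqueness of Fourier coefficients for integrable functions on the circle, this forces $[\psi,L_{\gamma_1}\psi](\alpha) = 0$ a.e.\ on $(0,1]$, which is in fact stronger than (\ref{equ:Condition}).

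The main (mild) obstacle is the first direction, where hypothesis (\ref{equ:Condition}) is stated only on $E_\psi$: the fix is exactly the Cauchy--Schwarz bound from Lemma \ref{lem:CS} together with the translation-invariance $[L_x\varphi, L_x\psi] = [\varphi,\psi]$ of Lemma \ref{lem:unitaryhom}$(i)$, which upgrades the vanishing on $E_\psi$ to vanishing on all of $(0,1]$ and lets the Fourier-coefficient argument run cleanly in both directions.
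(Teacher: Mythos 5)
Your proof is correct and follows essentially the same route as the paper's: both rest on the integral representation \eqref{equ:lrr-bracket}, the observation (via Lemma \ref{lem:CS}) that $[\psi,\lr_{\gamma_1}\psi]$ automatically vanishes off $E_\psi$ so that \eqref{equ:Condition} can be upgraded to vanishing a.e.\ on $(0,1]$, and uniqueness of Fourier coefficients for the converse. You merely spell out the Cauchy--Schwarz step (together with Lemma \ref{lem:unitaryhom}$(i)$) in more detail than the paper does.
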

\begin{proof}
We first note that, by Lemma \ref{lem:CS}, whenever $\alpha \notin E_\psi$ then $[\psi,\lr_\gamma\psi] = 0$ for all $\gamma$. Hence the vanishing of the bracket in condition (\ref{equ:Condition}) can be equivalently stated for a.e. $\alpha \in (0,1]$.

We prove the equivalence of (\ref{equ:Condition}) with (\ref{equ:ConditionEquiv1}) when $\gamma_1 \neq 0$, since otherwise (\ref{equ:ConditionEquiv1}) is always true. Let us suppose that (\ref{equ:Condition}) holds. Then (\ref{equ:ConditionEquiv1}) follows by (\ref{equ:lrr-bracket}) and the previous observation. On the contrary, let us suppose that (\ref{equ:ConditionEquiv1}) holds. Then again by (\ref{equ:lrr-bracket}) and the uniqueness of Fourier series coefficients we have that $[\psi, \lr_{\gamma_1}\psi] = 0$ for a.e. $\alpha \in (0,1]$.
\end{proof}
% The equivalence of (\ref{equ:Condition}) with (\ref{equ:ConditionEquiv2}) is a consequence of Lemma \ref{lem:unitaryhom}. Indeed let us call $\gamma'' = \gamma_1^{-1}\gamma_1'$, and decompose it as $\gamma'' = \gamma_1''\gamma_2''$ with $\gamma_i'' \in \Gamma_i, i=1,2$. Then if we apply the same argument used in the proof of Lemma \ref{lem:dualintegable} to Lemma \ref{lem:unitaryhom} we get
% \begin{equation}\label{equ:parentheses}
% [\lr_{\gamma_1}\psi,\lr_{\gamma_1'}\psi](\alpha) = [\psi,\lr_{\gamma_2''\gamma_1''}\psi](\alpha) = e^{-2\pi i \gamma_2'' \alpha} [\psi,\lr_{\gamma_1''}\psi](\alpha) .
% \end{equation}
% All we need to prove is then that $\gamma_1'' = 0$ if and only if $\gamma_1 = \gamma_1'$. By composition law (\ref{equ:Hlaw}), if we call $\gamma_1 = (\A m,\B n,0)$ and $\gamma_1' = (\A m',\B n',0)$, with $(m,n) , (m',n') \in \Z^d \times \Z^d$, then
% $$
% \gamma'' = \gamma_1^{-1} \gamma_1' = (-\A m, -\B n, \A m \cdot \B n)(\A m', \B n', 0) = (\A (m' - m), \B (n' - n), \A m \cdot \B (n - n'))
% $$
% so that $\gamma_1'' = (\A (m' - m), \B (n' - n), 0)$ and $\gamma_2'' = (0,0,\A m \cdot \B (n - n'))$.

We are ready to prove the theorem concerning orthonormal systems.

\begin{proof}[Proof of Theorem \ref{theo:orthogonal}]
Let us first suppose that $\{\lr_\gamma \psi\}_{\gamma \in \Gamma}$ is an orthonormal system. Then by (\ref{equ:lrr-bracket})
$$
\delta_{\gamma, 0} = \langle \psi, \lr_\gamma \psi\rangle_{L^2(\HD)} = \int_0^1 [\psi, \lr_{\gamma_1}\psi](\alpha) e^{-2\pi i \alpha \gamma_2} d\alpha .
$$
By Lemma \ref{lem:sesquilinear}, $[\psi, \lr_{\gamma_1}\psi]$ is in $L^1((0,1])$, so the result follows by uniqueness of Fourier series coefficients.

Conversely, let us suppose that $[\psi,\lr_{\gamma_1}\psi](\alpha) = \delta_{\gamma_1,0}$ for a.e. $\alpha \in (0,1]$ and all $\gamma_1 \in \Gamma_1$. Then if we call $\gamma'' = \gamma^{-1}\gamma'$, and decompose it as $\gamma'' = \gamma_1''\gamma_2''$ with $\gamma_i'' \in \Gamma_i, i=1,2$
\begin{eqnarray*}
\langle \lr_\gamma\psi, \lr_{\gamma'} \psi\rangle_{L^2(\HD)} & = & \langle \psi, \lr_{\gamma^{-1}\gamma'} \psi\rangle_{L^2(\HD)} = \int_0^1 [\psi, \lr_{\gamma_1''}\psi](\alpha) e^{-2\pi i \alpha \gamma_2''} d\alpha\\
& = & \delta_{\gamma_1'',0} \int_0^1 e^{-2\pi i \alpha \gamma_2''} d\alpha = \delta_{\gamma_1'',0}\delta_{\gamma_2'',0}
\end{eqnarray*}
so the result follows since $\delta_{\gamma_1'',0}\delta_{\gamma_2'',0} = \delta_{\gamma,\gamma'}$.
\end{proof}

\section{An isometric isomorphism}\label{sec:technical results}

Let $\psi$ be in $L^2(\HD)$, and call $E_\psi: = \{\alpha\in \T: \ [\psi, \psi](\alpha)>0\}$. Recall that the map $S_\psi$ at $\varphi\in L^2(\HD)$ was defined in (\ref{equ:Sfunctional}) as
$$
S_\psi[\varphi] (\alpha):= \chi_{E_\psi}(\alpha)\cfrac{[\varphi, \psi] (\alpha)}{[\psi ,  \psi](\alpha)} \quad \textrm{a.e.} \ \alpha\in (0,1].
$$

The following lemma shows how $S_\psi$ acts on $\Gamma$ shifts of a function.
\begin{lemma}\label{finite-sum}
Let $\psi$ be in $L^2(\HD)$. Then
\begin{itemize}
\item[i)] for all $\varphi \in L^2(\HD)$ and all $\Gamma \ni \gamma = \gamma_1\gamma_2$ with $\gamma_i \in \Gamma_i$, $i = 1,2$
\begin{displaymath}
S_\psi[\lr_{\gamma}\varphi](\alpha) = e^{2\pi i \gamma_2\alpha}S_\psi[\lr_{\gamma_1}\varphi](\alpha) \quad \textrm{a.e.} \ \alpha\in (0,1]
\end{displaymath}
\item[ii)] if $\varphi= \displaystyle{\sum_{\gamma_2\in \Gamma_2} a_{\gamma_2} \lr_{\gamma_2} \psi}$ be a finite sum with complex coefficients  $a_{\gamma_2}$, then
$$
S_\psi[\varphi](\alpha) = S_\psi\bigg[\sum_{\gamma_2\in \Gamma_2} a_{\gamma_2}  \lr_{\gamma_2}\psi\bigg](\alpha) = \bigg(\sum_{\gamma_2\in \Gamma_2}a_{\gamma_2} e^{2\pi i \gamma_2 \alpha}\bigg) \chi_{E_\psi}(\alpha) \quad \textrm{a.e.} \ \alpha\in (0,1].
$$
\end{itemize}
\end{lemma}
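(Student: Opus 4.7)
The plan is to prove (i) directly from the two properties of the bracket established in Lemma \ref{lem:unitaryhom}, and then to deduce (ii) as an immediate consequence of (i) combined with the sesquilinearity of the bracket (Lemma \ref{lem:sesquilinear}).

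For part (i), I start from the definition
\begin{displaymath}
S_\psi[\lr_\gamma \varphi](\alpha) = \chi_{E_\psi}(\alpha)\frac{[\lr_\gamma \varphi,\psi](\alpha)}{[\psi,\psi](\alpha)}
\end{displaymath}
and rewrite the numerator using that $\Gamma_2$ sits in the center of $\HD$, so $\gamma = \gamma_1\gamma_2 = \gamma_2\gamma_1$ and hence $\lr_\gamma = \lr_{\gamma_2}\lr_{\gamma_1}$. Applying Lemma \ref{lem:unitaryhom}(i) with $x=\gamma_2$, $x' = e$ moves the central translation from the first to the second slot:
\begin{displaymath}
[\lr_{\gamma_2}(\lr_{\gamma_1}\varphi),\psi](\alpha) = [\lr_{\gamma_1}\varphi,\lr_{\gamma_2^{-1}}\psi](\alpha).
\end{displaymath}
Since $\gamma_2^{-1} = -\gamma_2$ in $\Gamma_2 = \Z$, Lemma \ref{lem:unitaryhom}(ii) then produces the exponential factor $e^{2\pi i\gamma_2\alpha}$, and dividing through by $[\psi,\psi](\alpha)$ on $E_\psi$ yields (i).

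For part (ii), linearity of the bracket in the first argument (Lemma \ref{lem:sesquilinear}) gives $S_\psi[\,\cdot\,]$ the same linearity, so
\begin{displaymath}
S_\psi\Bigl[\sum_{\gamma_2\in\Gamma_2} a_{\gamma_2}\lr_{\gamma_2}\psi\Bigr](\alpha) = \sum_{\gamma_2\in\Gamma_2} a_{\gamma_2}\,S_\psi[\lr_{\gamma_2}\psi](\alpha),
\end{displaymath}
where the finiteness of the sum avoids any convergence issue. Applying (i) to each summand with the choice $\varphi = \psi$ and $\gamma_1 = 0$ reduces $S_\psi[\lr_{\gamma_2}\psi]$ to $e^{2\pi i\gamma_2\alpha}\,S_\psi[\psi]$, and the trivial computation $S_\psi[\psi](\alpha) = \chi_{E_\psi}(\alpha)[\psi,\psi](\alpha)/[\psi,\psi](\alpha) = \chi_{E_\psi}(\alpha)$ then delivers the desired formula.

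There is no real obstacle here: the statement is essentially a bookkeeping exercise combining the covariance property (i) and the twisted-periodicity property (ii) of Lemma \ref{lem:unitaryhom}. The only point requiring a small amount of attention is the use of centrality of $\Gamma_2$, which is what allows the decomposition $\gamma = \gamma_1\gamma_2$ to be manipulated freely and ensures that the bracket of two central translates collapses cleanly to a character times the original bracket.
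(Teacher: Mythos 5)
Your proof is correct and follows essentially the same route as the paper: part (i) is obtained from Lemma \ref{lem:unitaryhom} (you spell out the intermediate step $[\lr_{\gamma_2}\lr_{\gamma_1}\varphi,\psi]=[\lr_{\gamma_1}\varphi,\lr_{\gamma_2^{-1}}\psi]$, which the paper leaves implicit), and part (ii) follows from linearity of the bracket in its first argument together with $S_\psi[\psi]=\chi_{E_\psi}$. No gaps.
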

\begin{proof}
The proof of $i)$ follows from Lemma \ref{lem:unitaryhom}, $ii)$. To prove $ii)$, we see that $i)$
extends to finite sums by the linearity of the bracket map with respect to its first argument. Moreover, for $\gamma_2$ shifts we end up with $S_\psi[\varphi](\alpha)$, equal to $\chi_{E_\psi}(\alpha)$.
\end{proof}

\begin{proof}[Proof of Theorem \ref{theo:Isometry}]
The linearity of the map follows from its definition and the linearity of the  bracket function $[\cdot, \cdot]$ on its first argument. Let $\varphi\in L^2(\HD)$. Using the definition of $S_\psi$ and Lemma \ref{lem:CS} we have
\begin{eqnarray}
\int_0^1 |S_\psi[\varphi](\alpha)|^2 [\psi, \psi](\alpha) d\alpha& = & \int_{E_\psi} \frac{|[\varphi, \psi](\alpha)|^2}{ [\psi, \psi](\alpha)} \ d\alpha \leq \int_{E_\psi} \frac{[\varphi, \varphi](\alpha) [\psi, \psi](\alpha)}{ [\psi, \psi](\alpha)}\  d\alpha\label{equ:inequality}\\
& = & \int_0^1 [\varphi, \varphi](\alpha)  \  d\alpha = \|\varphi\|^2\nonumber
\end{eqnarray}
where the last identity is due to Lemma \ref{lem:dualintegable}.
We have thus proved that the image of the map $S_\psi$ is in the  Hilbert space $L^2(\T, [\psi,\psi](\alpha)d\alpha)$.

To prove that the restriction of $S_\psi$ to the space $\psis{\psi}{\Gamma_2,\lr}$ is an isometry (hence one-to-one) onto $L^2((0,1], [\psi,\psi](\alpha)d\alpha)$, we apply Lemma \ref{finite-sum} as follows.
If $\varphi\in \textnormal{span}\{\lr_{\gamma_2}\psi : \gamma_2\in \Gamma_2\}$, then we can write $\varphi = \displaystyle{\sum_{\gamma_2\in \Lambda}} a_{\gamma_2} \lr_{\gamma_2}\psi$ for some finite sequence $\{a_{\gamma_2}\}_{\gamma_2 \in \Lambda \subset \Gamma_2}$. Now by the sesquilinearity of $S_\psi$ and using Lemma \ref{lem:unitaryhom}, $ii)$
\begin{align*}
\int_0^1 \bigg| & S\bigg[\sum_{\gamma_2\in \Lambda} a_{\gamma_2}\lr_{\gamma_2}\psi\bigg](\alpha)\bigg|^2 \ [\psi,\psi](\alpha)d\alpha
\, = \int_0^1 \bigg|\sum_{\gamma_2} a_{\gamma_2} e^{2\pi i k \alpha}\bigg|^2\ [\psi,\psi](\alpha)d\alpha\\
& = \int_0^1 \left[\sum_{\gamma_2} a_{\gamma_2} \lr_{\gamma_2}\psi\ ,\  \sum_{\gamma_2} a_{\gamma_2} \lr_{\gamma_2}\psi\right](\alpha) \ d\alpha = \int_0^1 [\varphi,\varphi](\alpha) d\alpha = \|\varphi\|^2 .
\end{align*}
For general $\varphi \in \psis{\psi}{\Gamma_2, \lr}$ we have that for any $\epsilon > 0$ we can choose a finite $\Lambda_\epsilon \subset \Gamma_2$ and $\varphi_\epsilon = \displaystyle{\sum_{\gamma_2\in \Lambda_\epsilon}} a_{\gamma_2} \lr_{\gamma_2}\psi$ such that $\|\varphi - \varphi_\epsilon\| \leq \epsilon$. Then
\begin{equation}\label{equ:epsilon}
\|\varphi\| \leq \|\varphi - \varphi_\epsilon\| + \|\varphi_\epsilon\| \leq \epsilon + \|\varphi_\epsilon\| .
\end{equation}
Moreover
\begin{align*}
\|\varphi_\epsilon\| & = \left(\int_0^1 |S_\psi[\varphi_\epsilon](\alpha)|^2 [\psi, \psi](\alpha) d\alpha\right)^\frac12\\
& \leq \left(\int_0^1 |S_\psi[\varphi_\epsilon - \varphi](\alpha)|^2 [\psi, \psi](\alpha) d\alpha\right)^\frac12 + \left(\int_0^1 |S_\psi[\varphi](\alpha)|^2 [\psi, \psi](\alpha) d\alpha\right)^\frac12\\
& \leq \|\varphi - \varphi_\epsilon\| + \left(\int_0^1 |S_\psi[\varphi](\alpha)|^2 [\psi, \psi](\alpha) d\alpha\right)^\frac12
\end{align*}
where the last inequality is due to (\ref{equ:inequality}). By (\ref{equ:epsilon}) we can then deduce that
$$
\left(\int_0^1 |S_\psi[\varphi](\alpha)|^2 [\psi, \psi](\alpha) d\alpha\right)^\frac12 \geq \|\varphi_\epsilon\| - \|\varphi - \varphi_\epsilon\| \geq \|\varphi_\epsilon\| - \epsilon \geq \|\varphi\| - 2\epsilon\ .
$$
This combined with (\ref{equ:inequality}) provides then the isometry for the space $\psis{\psi}{\Gamma_2,\lr}$.

To show that $S_\psi$ is onto we argue by contradiction. Suppose that $S_\psi(\psis{\psi}{\Gamma_2,\lr}) \subsetneq L^2((0,1], [\psi,\psi](\alpha)d\alpha)$ holds. Then we can choose a nontrivial $f \in L^2((0,1], [\psi,\psi](\alpha)d\alpha)$ such that $f \bot \, S_\psi( \psis{\psi}{\Gamma_2,\lr})$, i.e.
$$
0 = \int_0^1 f(\alpha) \overline{S_\psi[\lr_{\gamma_2}\psi](\alpha)} [\psi,\psi](\alpha)d\alpha \quad \forall \ \gamma_2 \in \Gamma_2 .
$$
But since $S_\psi[\lr_{\gamma_2}\psi](\alpha) = e^{2\pi i \alpha \gamma_2} S_\psi[\psi](\alpha) = e^{2\pi i \alpha \gamma_2} \chi_{E_\psi}$ we are saying that
$$
0 = \int_0^1 e^{-2\pi i \alpha \gamma_2} f(\alpha) [\psi,\psi](\alpha)d\alpha \quad \forall \ \gamma_2 \in \Gamma_2
$$
so, by uniqueness of Fourier series coefficients and the nonnegativity of $[\psi,\psi](\alpha)$, this implies that $f = 0$ in $L^2((0,1], [\psi,\psi](\alpha)d\alpha)$, hence the contradiction.
\end{proof}

\section{Riesz bases and frames}\label{sec:Riesz bases and frames}

In this section we prove Theorems \ref{theo:Riesz} and \ref{theo:frames}. Recall that $\{\lr_{\gamma}\psi\}_{\gamma \in \Gamma}$ is a Riesz basis
for $\langle \psi \rangle_{\Gamma,L}$  with constants $A$ and $B$ if it is a basis and if for all sequences $\{a_\gamma\}_{\Gamma} \in \ell_2 (\Gamma)$  it satisfies
\begin{equation} \label{4a}
A \sum_{\gamma \in \Gamma}|a_\gamma|^2 \leq \|\displaystyle{\sum_{\gamma \in \Gamma}} a_{\gamma} \lr_{\gamma}\psi\|^2_{L^2(\HD)} \leq B \sum_{\gamma \in \Gamma}|a_\gamma|^2\,,
\end{equation}
while $\{\lr_{\gamma}\psi\}_{\gamma \in \Gamma}$ is a frame for $\psis{\psi}{\Gamma,\lr}$ with constants $A$ and $B$ if for every $\varphi \in \psis{\psi}{\Gamma,\lr}$ it holds
\begin{equation} \label{4b}
A \|\varphi\|_{L^2(\HD)} \leq \sum_{\gamma \in \Gamma} |\langle \varphi, \lr_{\gamma} \psi\rangle_{L^2(\HD)}|^2 \leq B \|\varphi\|_{L^2(\HD)} .
\end{equation}

Both theorems require condition (\ref{equ:Condition}) or its equivalent forms described by Proposition \ref{prop:Condition}. One relevant consequence of such condition is the following.

\begin{lemma}\label{orthogonality}
Let $\psi \in  L^2(\HD)$ and assume condition (\ref{equ:Condition}). Then,
\begin{itemize}
\item[i)] If $\gamma_1, \eta_1 \in \Gamma_1$ and $ \gamma_1 \neq \eta_1$, $\langle L_{\gamma_1} \psi\rangle_{\Gamma_2, L} \perp 
\langle L_{\eta_1} \psi\rangle_{\Gamma_2, L}$ and 
\begin{equation}\label{equ:directsum}
\psis{\psi}{\Gamma,\lr} = \bigoplus_{\gamma_1 \in \Gamma_1} \psis{L_{\gamma_1}\psi}{\Gamma_2,\lr} .
\end{equation}
as an orthogonal direct sum.
\item[ii)] For each $\varphi \in \psis{\psi}{\Gamma,\lr}$ there exist unique $\varphi_{\gamma_1} \in \psis{L_{\gamma_1}\psi}{\Gamma_2,\lr}\,,
\gamma_1 \in \Gamma_1\,,$ such that $\varphi = \sum_{\gamma_1 \in \Gamma_1} \varphi_{\gamma_1}$ (convergence in $L^2(\HD)$) and 
\begin{equation} \label{4c}
\| \varphi \|^2_{L^2(\HD)} = \sum_{\gamma_1 \in \Gamma_1} \| \varphi_{\gamma_1}\|^2_{L^2(\HD)}\,.
\end{equation}
\end{itemize}
\end{lemma}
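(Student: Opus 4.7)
The plan is to prove part i) first, reducing orthogonality of the cyclic subspaces $\psis{L_{\gamma_1}\psi}{\Gamma_2,L}$ and $\psis{L_{\eta_1}\psi}{\Gamma_2,L}$ to the vanishing of inner products on dense generating sets, and then to deduce part ii) formally from the resulting orthogonal direct sum structure.

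For the orthogonality in part i), I would fix $\gamma_1 \ne \eta_1$ in $\Gamma_1$ and, by continuity of the inner product, reduce the claim to showing
$$
\langle L_{\gamma_2}L_{\gamma_1}\psi,\; L_{\gamma_2'}L_{\eta_1}\psi \rangle_{L^2(\HD)} = 0 \quad \forall\, \gamma_2,\gamma_2' \in \Gamma_2.
$$
Because $L$ is a representation, this inner product equals $\langle \psi, L_{g}\psi\rangle$ with $g = (\gamma_2\gamma_1)^{-1}\gamma_2'\eta_1$, via the shift property in Lemma \ref{lem:unitaryhom} i). The key computation is to decompose $g$ as $g = \mu_1\mu_2$ with $\mu_i \in \Gamma_i$: exploiting that $\Gamma_2$ is contained in the center of $\HD$ and commutes with every element, the $\Gamma_1$ component collects to $\mu_1 = \gamma_1^{-1}\eta_1 \in \Gamma_1$ (one checks this stays in $\A\Z^d \times \B\Z^d$ using (\ref{equ:Hlaw}) and the form (\ref{eq:lattice}) of $\Gamma$), and crucially $\mu_1 \ne 0$ because $\gamma_1 \ne \eta_1$. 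Applying (\ref{equ:lrr-bracket}) then expresses $\langle \psi, L_g\psi\rangle$ as an integral of $[\psi, L_{\mu_1}\psi]$ against an exponential, and condition (\ref{equ:Condition}), together with Lemma \ref{lem:CS} which kills the bracket off $E_\psi$, forces this integral to vanish.

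For the direct-sum identity (\ref{equ:directsum}), I would observe that centrality of $\Gamma_2$ gives $L_\gamma\psi = L_{\gamma_2}(L_{\gamma_1}\psi)$ for every $\gamma = \gamma_1\gamma_2 \in \Gamma$, so each generator of $\psis{\psi}{\Gamma,L}$ already lives in some $\psis{L_{\gamma_1}\psi}{\Gamma_2,L}$. Combined with the pairwise orthogonality just established and the definition of the orthogonal Hilbert sum, this yields (\ref{equ:directsum}). Part ii) is then a standard consequence: let $P_{\gamma_1}$ denote the orthogonal projection onto $\psis{L_{\gamma_1}\psi}{\Gamma_2,L}$, set $\varphi_{\gamma_1} = P_{\gamma_1}\varphi$, and apply Pythagoras together with the convergence of the projection series in an orthogonal direct sum to obtain uniqueness, the series representation, and (\ref{4c}).

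The main obstacle I anticipate is the noncommutative group arithmetic in the first step: one must carefully track how $(\gamma_2\gamma_1)^{-1}\gamma_2'\eta_1$ decomposes into $\Gamma_1\Gamma_2$ components under the law (\ref{equ:Hlaw}), since a priori the inverse $(\gamma_1)^{-1}$ introduces extra central contributions $p\cdot q$. Once centrality of $\Gamma_2$ is exploited to commute all $\Gamma_2$ factors to the right, however, only the $\Gamma_1$ component $\gamma_1^{-1}\eta_1 \ne 0$ matters for invoking (\ref{equ:Condition}), and the rest of the argument is routine.
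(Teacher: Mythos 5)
Your argument is correct and is exactly the expansion of what the paper leaves implicit: its proof of this lemma is the one-line ``Follows from Proposition \ref{prop:Condition} by easy density arguments,'' and your reduction to generators, the decomposition of $(\gamma_2\gamma_1)^{-1}\gamma_2'\eta_1$ into a nonzero $\Gamma_1$-part times a central part, and the appeal to (\ref{equ:lrr-bracket}) with condition (\ref{equ:Condition}) is precisely the content of Proposition \ref{prop:Condition} plus density. The only cosmetic point is that $\mu_1$ is the $\Gamma_1$-component of $\gamma_1^{-1}\eta_1$ rather than $\gamma_1^{-1}\eta_1$ itself (the leftover integer central term is absorbed into $\mu_2$), which you already note in your final paragraph.
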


\begin{proof}
Follows from Proposition \ref{prop:Condition} by easy density arguments.
\end{proof}

\begin{proposition}\label{prop:consequences}
Let $\psi$ be in $L^2(\HD)$ and assume condition (\ref{equ:Condition}). Then
\begin{itemize}
\item[i)] $\{\lr_{\gamma}\psi\}_{\gamma\in \Gamma}$ is a Riesz family with constants $A$ and $B$ if and only if $\{\lr_{\gamma_2}\psi\}_{\gamma_2 \in \Gamma_2}$ is a Riesz family with constants $A$ and $B$.
\item[ii)] $\{\lr_{\gamma}\psi\}_{\gamma \in \Gamma}$ is a frame for $\psis{\psi}{\Gamma,\lr}$ with constants $A$ and $B$ if and only if $\{\lr_{\gamma_2}\psi\}_{\gamma_2 \in \Gamma_2}$ is a frame for $\psis{\psi}{\Gamma_2,\lr}$ with constants $A$ and $B$.
\end{itemize}
\end{proposition}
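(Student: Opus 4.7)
The linchpin is the orthogonal direct sum decomposition supplied by Lemma \ref{orthogonality}, combined with the fact that $\Gamma_2=\Z$ sits in the center of $\HD$. Centrality gives $\gamma_1\gamma_2=\gamma_2\gamma_1$, hence $\lr_{\gamma_1\gamma_2}\psi=\lr_{\gamma_2}(\lr_{\gamma_1}\psi)$, so the $\Gamma$-orbit of $\psi$ splits as a disjoint union of $\Gamma_2$-orbits of the shifts $\lr_{\gamma_1}\psi$, each one living inside its own subspace $\psis{\lr_{\gamma_1}\psi}{\Gamma_2,\lr}$. Moreover $\lr_{\gamma_1}$ is a unitary intertwiner between $\psis{\psi}{\Gamma_2,\lr}$ and $\psis{\lr_{\gamma_1}\psi}{\Gamma_2,\lr}$ that carries the system $\{\lr_{\gamma_2}\psi\}_{\gamma_2}$ onto $\{\lr_{\gamma_2}\lr_{\gamma_1}\psi\}_{\gamma_2}$. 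These two observations together reduce both equivalences to a direct summation in $\gamma_1$.

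For part \emph{i)}, for any finite sequence $\{a_{\gamma_1\gamma_2}\}$ I would write
$$
\sum_{\gamma\in\Gamma}a_\gamma\lr_\gamma\psi=\sum_{\gamma_1\in\Gamma_1}\lr_{\gamma_1}\Bigl(\sum_{\gamma_2\in\Gamma_2}a_{\gamma_1\gamma_2}\lr_{\gamma_2}\psi\Bigr),
$$
whose outer summands belong to the mutually orthogonal subspaces $\psis{\lr_{\gamma_1}\psi}{\Gamma_2,\lr}$ by Lemma \ref{orthogonality}(i). Pythagoras together with the isometry of $\lr_{\gamma_1}$ gives
$$
\Bigl\|\sum_\gamma a_\gamma\lr_\gamma\psi\Bigr\|_{L^2(\HD)}^2=\sum_{\gamma_1}\Bigl\|\sum_{\gamma_2}a_{\gamma_1\gamma_2}\lr_{\gamma_2}\psi\Bigr\|_{L^2(\HD)}^2.
$$
Applying the Riesz bound (\ref{4a}) for $\{\lr_{\gamma_2}\psi\}_{\gamma_2}$ to each inner norm and summing in $\gamma_1$ yields the same bound for $\{\lr_\gamma\psi\}_\gamma$. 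The converse is a one-line restriction: test (\ref{4a}) on sequences supported at $\gamma_1=0$.

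For part \emph{ii)}, given $\varphi\in\psis{\psi}{\Gamma,\lr}$ Lemma \ref{orthogonality}(ii) gives the unique decomposition $\varphi=\sum_{\gamma_1}\varphi_{\gamma_1}$ with $\varphi_{\gamma_1}\in\psis{\lr_{\gamma_1}\psi}{\Gamma_2,\lr}$ and $\|\varphi\|^2=\sum_{\gamma_1}\|\varphi_{\gamma_1}\|^2$. Since $\lr_{\gamma_1\gamma_2}\psi=\lr_{\gamma_2}\lr_{\gamma_1}\psi\in\psis{\lr_{\gamma_1}\psi}{\Gamma_2,\lr}$, the orthogonal decomposition collapses the inner product to the $\gamma_1$-component:
$$
\langle\varphi,\lr_{\gamma_1\gamma_2}\psi\rangle_{L^2(\HD)}=\langle\varphi_{\gamma_1},\lr_{\gamma_2}\lr_{\gamma_1}\psi\rangle_{L^2(\HD)}=\langle\tilde\varphi_{\gamma_1},\lr_{\gamma_2}\psi\rangle_{L^2(\HD)},
$$
where $\tilde\varphi_{\gamma_1}:=\lr_{\gamma_1^{-1}}\varphi_{\gamma_1}\in\psis{\psi}{\Gamma_2,\lr}$ and $\|\tilde\varphi_{\gamma_1}\|=\|\varphi_{\gamma_1}\|$. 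Therefore
$$
\sum_{\gamma\in\Gamma}|\langle\varphi,\lr_\gamma\psi\rangle|^2=\sum_{\gamma_1}\sum_{\gamma_2}|\langle\tilde\varphi_{\gamma_1},\lr_{\gamma_2}\psi\rangle|^2,
$$
and the frame inequality for $\{\lr_{\gamma_2}\psi\}_{\gamma_2}$ applied term by term together with (\ref{4c}) produces the frame inequality (\ref{4b}) for $\{\lr_\gamma\psi\}_\gamma$ with the same constants. The converse direction again follows by specializing to $\varphi\in\psis{\psi}{\Gamma_2,\lr}\subset\psis{\psi}{\Gamma,\lr}$, for which only $\varphi_0=\varphi$ is nonzero, and the big sum over $\Gamma$ reduces to the sum over $\Gamma_2$.

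The main obstacle is really a bookkeeping one: ensuring that the group law $\lr_{\gamma_1\gamma_2}=\lr_{\gamma_2}\lr_{\gamma_1}$ (which is where the centrality of $\Gamma_2$ enters decisively) is used correctly so that each $\lr_\gamma\psi$ is recognized as an element of exactly one of the orthogonal pieces $\psis{\lr_{\gamma_1}\psi}{\Gamma_2,\lr}$. Without this identification the Riesz/frame norm does not split as $\sum_{\gamma_1}$ of norms on the $\gamma_1$-th component, and the reduction to the $\Gamma_2$-problem collapses.
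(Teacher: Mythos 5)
Your proof is correct and follows essentially the same route as the paper's: both rely on the orthogonal decomposition of $\psis{\psi}{\Gamma,\lr}$ into the subspaces $\psis{\lr_{\gamma_1}\psi}{\Gamma_2,\lr}$ from Lemma \ref{orthogonality}, the Pythagorean identity (\ref{4c}), the unitarity of $\lr_{\gamma_1}$, and the centrality of $\Gamma_2$ to commute $\lr_{\gamma_1}$ past $\lr_{\gamma_2}$, with the converse directions obtained by restricting to the $\gamma_1=0$ component. No substantive differences.
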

\newpage
\begin{proof}
$i)$ Suppose that $\{\lr_{\gamma_2}\psi\}_{\gamma_2 \in \Gamma_2}$ is a Riesz family with constants $A$ and $B\,.$
Let $\{a_\gamma\}_{\gamma \in \Gamma} \in \ell_2 (\Gamma)\,.$ Write
$$ \varphi \doteq \sum_{\gamma \in \Gamma} a_\gamma L_\gamma \psi = \sum_{\gamma_1} \sum_{\gamma_2}  a_{\gamma_1 \gamma_2}
L_{\gamma_2} L_{\gamma_1} \psi \doteq \sum_{\gamma_1} \varphi_{\gamma_1}\,,
$$
with $\varphi_{\gamma_1} \doteq \sum_{\gamma_2} a_{\gamma_1 \gamma_2} L_{\gamma_2} L_{\gamma_1}\psi \in \psis{L_{\gamma_1}\psi}{\Gamma_2,\lr} $
since   $\{\lr_{\gamma_2}\psi\}_{\gamma_2 \in \Gamma_2}$ is a Riesz family  
%$\| L_{\gamma_2} L_{\gamma_1}\psi \|_{L^2(\HD)} = \| \psi \|_{L^2(\HD)}\,,$ 
and $\{a_{\gamma_1 \gamma_2}\} _{\gamma_2 \in \Gamma_2} 
\in \ell_2 (\Gamma_2)$ for each $\gamma_1 \in \Gamma_1\,.$ By (\ref{4c})
\begin{eqnarray*}
\| \sum_{\gamma \in \Gamma} a_\gamma L_\gamma \psi \|^2_{L^2(\HD)} & = & \sum_{\gamma_1}  
\| \sum_{\gamma_2} a_{\gamma_1 \gamma_2} L_{\gamma_1} L_{\gamma_2} \psi \|^2_{L^2(\HD)} \\
& = & \sum_{\gamma_1} \| \sum_{\gamma_2} a_{\gamma_1 \gamma_2}  L_{\gamma_2} \psi \|^2_{L^2(\HD)} 
\end{eqnarray*}
since $L_{\gamma_1}$ is an isometry on $L^2(\HD)\,.$ Our hypothesis implies
$$ \| \sum_{\gamma \in \Gamma} a_\gamma L_\gamma \psi \|^2_{L^2(\HD)} \thickapprox 
\sum_{\gamma_1} \sum_{\gamma_2}  |a_{\gamma_1 \gamma_2}|^2 = \sum_{\gamma \in \Gamma} |a_\gamma|^2\,,
$$
where the equivalence is up to the Riesz family constants $A$ and $B$. This shows \eqref{4a} for the family $\{\lr_{\gamma}\psi\}_{\gamma\in \Gamma}\,.$
The converse is straight forward.

$ii)$ Suppose that $\{\lr_{\gamma_2}\psi\}_{\gamma_2 \in \Gamma_2}$ is a frame for $\psis{\psi}{\Gamma_2,\lr}$ 
with constants $A$ and $B\,.$ By Lemma \ref{orthogonality} ii), choose $\varphi_{\gamma_1} \in \psis{L_{\gamma_1} \psi}{\Gamma_2,\lr},
\gamma_1 \in \Gamma_1,$ such that $\varphi = \sum_{\gamma_1} \varphi_{\gamma_1}$ (convergence in $L^2(\HD)$) and \eqref{4c} holds.
Then, by the orthogonality relations in Lemma \ref{orthogonality} i),
\begin{eqnarray*}
 \sum_{\gamma \in \Gamma} |\langle \varphi , L_\gamma \psi\rangle_{L^2(\HD)} |^2 & = & \sum_{\eta_1}
 \sum_{\eta_2} |\langle \sum_{\gamma_1} \varphi_{\gamma_1} , L_{\eta_1} L_{\eta_2}\psi \rangle_{L^2(\HD)}|^2 \\
& = & \sum_{\gamma_1}  \sum_{\eta_2} |\langle  \varphi_{\gamma_1} , L_{\gamma_1} L_{\eta_2}\psi \rangle_{L^2(\HD)}|^2 \\
& = & \sum_{\gamma_1}  \sum_{\eta_2} |\langle  L_{\gamma_1^{-1}} \varphi_{\gamma_1} ,  L_{\eta_2}\psi \rangle_{L^2(\HD)}|^2 \,.
\end{eqnarray*}
Since  $  L_{\gamma_1^{-1}} \varphi_{\gamma_1} \in \psis{ \psi}{\Gamma_2,\lr}$  our hypothesis implies
$$  \sum_{\gamma \in \Gamma} |\langle \varphi , L_\gamma \psi \rangle_{L^2(\HD)}|^2 \thickapprox
\sum_{\gamma_1} \|L_{\gamma_1^{-1}} \varphi_{\gamma_1}\|^2_{L^2(\HD)} = \sum_{\gamma_1} \|\varphi_{\gamma_1}\|^2_{L^2(\HD)} =
\| \varphi \|^2_{L^2(\HD)},
$$
by \eqref{4c}where the equivalence is up to the frame constants $A$ and $B$. This proves that $\{\lr_{\gamma}\psi\}_{\gamma \in \Gamma}$ is a frame for $\psis{\psi}{\Gamma,\lr}$ with constants $A$ and $B.$

For the converse, let $\varphi \in \psis{\psi}{\Gamma_2,\lr}.$ By the orthogonality relation in  Lemma \ref{orthogonality} i), $\langle \varphi ,
L_{\gamma_1} L_{\gamma_2}\psi \rangle=0$ for all $\gamma_1 \in \Gamma_1, \gamma_1 \neq 0.$ Then
$$ \sum_{\gamma_2} |\langle \varphi , L_{\gamma_2} \psi \rangle_{L^2(\HD)}|^2 =
\sum_{\gamma_1} \sum_{\gamma_2} |\langle \varphi , L_{\gamma_1} L_{\gamma_2} \psi \rangle_{L^2(\HD)}|^2 \thickapprox 
\| \varphi \|^2_{L^2(\HD)},
$$
since $ \varphi \in \psis{\psi}{\Gamma_2,\lr} \subset \psis{\psi}{\Gamma,\lr},$ where the equivalence is up to the frame constants $A$ and $B.$
\end{proof}

We are now ready to prove the two main theorems.
\begin{proof}[Proof of Theorem \ref{theo:Riesz}]
Let us first assume $b)$. Observe that by Lemma \ref{lem:unitaryhom}, $i)$, $b)$ is equivalent to
$$
A \leq [\lr_{\gamma_1}\psi, \lr_{\gamma_1}\psi](\alpha) \leq B \quad \textnormal{a.e.} \ \alpha \in (0,1] \, , \forall \ \gamma_1 \in \Gamma_1\,.
$$
This implies that $E_{\psi_{\gamma_1}} = E_\psi = (0,1] $ a.e. Hence, by Lemma (\ref{finite-sum}) $i)$
$$
S_{\lr_{\gamma_1}\psi} [\lr_{\gamma_2}\lr_{\gamma_1}\psi](\alpha) = e^{2\pi i \alpha \gamma_2} \quad \textnormal{a.e.} \ \alpha \in (0,1] .
$$
Moreover, we have that $L^2((0,1],[\lr_{\gamma_1}\psi, L_{\gamma_1}\psi](\alpha) d\alpha) \approx L^2((0,1], d\alpha)$ for all $\gamma_1 \in \Gamma_1$, where $\approx$ stands for equivalence up to constants $A$ and $B$. Since, by Theorem \ref{theo:Isometry}, $S_{L_{\gamma_1}\psi}$ is a linear isometry from $\psis{L_{\gamma_1}\psi}{\Gamma_2,\lr}$ onto $L^2((0,1],[L_{\gamma_1}\psi, L_{\gamma_1}\psi](\alpha) d\alpha)$, and since $\{e^{2\pi i \alpha \gamma_2}\}_{\gamma_2 \in \Gamma_2}$ is an orthonormal basis of $L^2((0,1], d\alpha)$, we conclude that $\{\lr_{\gamma_2}\lr_{\gamma_1}\psi\}_{\gamma_2 \in \Gamma_2}$ is a basis of $\psis{\lr_{\gamma_1}\psi}{\Gamma_2,\lr}$ for all $\gamma_1 \in \Gamma_1$. Thus by (\ref{equ:directsum}) we have that $\{\lr_{\gamma}\psi\}_{\gamma \in \Gamma}$ is a basis of $\psis{\psi}{\Gamma,\lr}$.

Now, using Proposition \ref{prop:consequences} i), to conclude the proof that $b)$ implies $a)$ and to prove that $a)$ implies $b)$ we need only to prove that the following conditions are equivalent:
\begin{itemize}
\item[a')] $\{\lr_{\gamma_2}\psi\}_{\gamma_2 \in \Gamma_2}$ is a Riesz family for $\psis{\psi}{\Gamma_2,\lr}$ with constants $A$ and $B$
\item[b)] $A \leq [\psi,\psi](\alpha) \leq B$ for a.e. $\alpha \in (0,1]$.
\end{itemize}

If $b)$ holds, and $\{c_{\gamma_2}\}_{\gamma_2 \in \Gamma_2} \in \ell_2 (\Gamma_2),$  by Theorem \ref{theo:Isometry} and 
Lemma \ref{finite-sum},
\begin{eqnarray*}
\|\sum_{\gamma_2} c_{\gamma_2} \lr_{\gamma_2} \psi\|^2_{L^2(\HD)} & = & \bigg\|S_\psi\bigg[\sum_{\gamma_2} c_{\gamma_2} \lr_{\gamma_2} \psi\bigg]\bigg\|_{L^2((0,1],[\psi,\psi](\alpha)d\alpha)}\\ &\approx & \bigg\|S_\psi\bigg[\sum_{\gamma_2} c_{\gamma_2} \lr_{\gamma_2} \psi\bigg]\bigg\|_{L^2((0,1],d\alpha)} \\
& = & \int_0^1 |\sum_{\gamma_2} c_{\gamma_2} e^{2\pi i \gamma_2 \alpha}|^2 d \alpha = \sum_{\gamma_2} |c_{\gamma_2}|^2\,,
\end{eqnarray*}
where equivalence is up to constants $A$ and $B$. Thus we have obtained $a')$.

Conversely, let us suppose that $a')$ holds and proceed by contradiction. Let us assume that $[\psi,\psi](\alpha) < A$ on a set $E \subset (0,1]$ of positive measure. Since $\chi_E \in L^2((0,1],d\alpha)$, there exists a sequence $\{c_{\gamma_2}\}_{\gamma_2 \in \Gamma_2} \in  \ell_2 (\Gamma_2)$ such that
$$
\chi_E(\alpha) = \displaystyle{\sum_{\gamma_2}c_{\gamma_2}e^{2\pi i \gamma_2 \alpha}} \, , \quad \textnormal{and} \ |E| = \int_0^1 |\chi_E(\alpha)|^2 d\alpha = \sum_{\gamma_2} |c_{\gamma_2}|^2 .
$$
But, again using Theorem \ref{theo:Isometry} and Lemma \ref{finite-sum}, $ii)$
\begin{eqnarray*}
\|\sum_{\gamma_2} c_{\gamma_2} \lr_{\gamma_2} \psi\|^2_{L^2(\HD)} & = & \int_0^1 \bigg|S_\psi\bigg[\sum_{\gamma_2} c_{\gamma_2} \lr_{\gamma_2} \psi\bigg](\alpha)\bigg|^2 [\psi,\psi](\alpha) d\alpha\\
& = & \int_0^1 |\chi_E(\alpha)|^2 [\psi,\psi](\alpha) d\alpha < A |E|= A \sum_{\gamma_2} |c_{\gamma_2}|^2
\end{eqnarray*}
which contradicts $a')$. This proves that  $[\psi,\psi](\alpha) \geq A$ for a.e. $\alpha \in (0,1].$ 
The proof that $[\psi,\psi](\alpha) \leq B$ for a.e. $\alpha \in (0,1]$ is similar.
\end{proof}

\begin{proof}[Proof of Theorem \ref{theo:frames}]
As in the previous proof, by Proposition \ref{prop:consequences} it suffices to prove that $b)$ is equivalent to

$a')$ $\{\lr_{\gamma_2}\psi\}_{\gamma_2 \in \Gamma_2}$ is a frame for $\psis{\psi}{\Gamma_2,\lr}$ with constants $A$ and $B$.

Let us first assume $b)$, and choose a $\varphi \in \psis{\psi}{\Gamma_2,\lr}$. Then by Theorem \ref{theo:Isometry} and Lemma \ref{finite-sum}, $i)$
\begin{eqnarray}
\sum_{\gamma_2 \in \Gamma_2} |\langle \varphi, \lr_{\gamma_2}\psi\rangle_{L^2(\HD)}|^2 & = & \sum_{\gamma_2 \in \Gamma_2} |\langle S_\psi[\varphi], S_\psi[\lr_{\gamma_2}\psi]\rangle_{L^2((0,1],[\psi,\psi](\alpha)d\alpha}|^2\nonumber\\
& = & \sum_{\gamma_2 \in \Gamma_2} | \int_0^1 S_\psi[\varphi](\alpha) e^{-2\pi i \alpha \gamma_2} [\psi,\psi](\alpha) d\alpha |^2\nonumber\\
& = & \int_0^1 |S_\psi[\varphi](\alpha)|^2 \, |[\psi,\psi](\alpha)|^2 d\alpha\label{equ:dummy}
\end{eqnarray}
and the last transition is due to Parseval's identity. So if $A \leq [\psi,\psi](\alpha) \leq B$ for a.e. $\alpha \in E_\psi$, we get
$$
\sum_{\gamma_2 \in \Gamma_2} |\langle \varphi, \lr_{\gamma_2}\psi\rangle_{L^2(\HD)}|^2 \approx \int_0^1 |S_\psi[\varphi](\alpha)|^2 \, [\psi,\psi](\alpha) d\alpha = \|\varphi\|_{L^2(\HD)}
$$
where the last identity is due to Theorem \ref{theo:Isometry} and equivalence is up to constants $A$ and $B$.

Now let us assume $a')$ and suppose by contradiction that $[\psi,\psi](\alpha) < A$ on a set $E \subset E_\psi$ of positive measure. Observe that $\chi_E \in L^2((0,1], [\psi,\psi](\alpha)d\alpha)$ because $[\psi,\psi] \in L^1((0,1],d\alpha)$. Since $S_\psi$ is onto, we can then choose $\varphi_E \in \psis{\psi}{\Gamma_2,\lr}$ such that $S_\psi[\varphi_E] = \chi_E$. As for (\ref{equ:dummy})
\begin{eqnarray*}
\sum_{\gamma_2 \in \Gamma_2} |\langle \varphi_E, \lr_{\gamma_2}\psi\rangle_{L^2(\HD)}|^2 & = & \int_0^1 |S_\psi[\varphi_E](\alpha)|^2 \, |[\psi,\psi](\alpha)|^2 d\alpha \\
& = & \int_0^1 |\chi_E(\alpha)|^2 \, |[\psi,\psi](\alpha)|^2 d\alpha < A \int_E [\psi,\psi](\alpha) d\alpha
\end{eqnarray*}
but since $S_\psi$ is an isometry
$$
\int_E [\psi,\psi](\alpha) d\alpha = \int_0^1 |S_\psi[\varphi_E](\alpha)|^2 \, [\psi,\psi](\alpha) d\alpha = \|\varphi_E\|^2_{L^2(\HD)}\,,
$$
hence contradicting $a')$. This proves that  $[\psi,\psi](\alpha) \geq A$ for a.e. $\alpha \in E_\psi.$
The proof that $[\psi,\psi](\alpha) \leq B$ for a.e. $\alpha \in E_\psi$ is similar.
\end{proof}

\section{Applications}\label{sec:Applications}

In this section we show as an application of the previous theory how frames for cyclic subspaces can be associated to orthonormal Gabor systems
$\in L^2(\mathbb R^d).$

Let $u$ be a unit norm element in $L^2(\R^d)$, and call $u_\lambda$ its normalized scalings
$$
u_\lambda(x) \doteq |\lambda|^{\frac{d}{2}} u(\lambda x) \, ,\quad \lambda \in \R^*.
$$
Let $\P_\lambda$ be the associated projector on $L^2(\R^d)$
$$
\P_\lambda = u_\lambda \otimes u_\lambda \, , \quad \P_\lambda : L^2(\R^d) \ni f \mapsto \langle f, u_\lambda\rangle_{L^2(\R^d)} u_\lambda
$$
and define, for all $\epsilon \in (0,1)$
$$
H_{\epsilon}(\lambda) = \left\{
\begin{array}{cl}
\P_\lambda & \epsilon < \lambda \leq 1\\
0 & \textnormal{otherwise} \ .
\end{array}
\right.
$$
For all $\epsilon \in (0,1)$, $H_{\epsilon}$ belongs to $L^2(\R^*,\HS(L^2(\R^d)),|\lambda|^d d\lambda)$, since
$$
\|H_{\epsilon}(\lambda)\|_{\HS(L^2(\R^d))} = \left\{
\begin{array}{cl}
1 & \epsilon < \lambda \leq 1\\
0 & \textnormal{otherwise}
\end{array}
\right.
$$
so
$$
\|H_{\epsilon}(\lambda)\|^2_{L^2(\R^*,\HS(L^2(\R^d)),|\lambda|^d d\lambda)} = \int_{\R} \|H_{\epsilon}(\lambda)\|^2_{\HS(L^2(\R^d))}|\lambda|^d d\lambda = \int_\epsilon^1 \lambda^d d\lambda = \frac{1 - \epsilon^{d + 1}}{d + 1} \, .
$$

We can then define $\psi_\epsilon \in L^2(\HD)$ as $\psi_\epsilon = \F_\HD^{-1} H_\epsilon$, and prove the following theorem.

\begin{theorem}\label{theo:example}
Let $\{\uir{\alpha}(\gamma_1)u_\alpha\}_{\gamma_1 \in \Gamma_1}$ be an orthonormal system in $L^2(\R^d)$ for a.e. $0 < \epsilon < \alpha \leq 1$. Then $\{\lr_\gamma \psi_\epsilon\}$ is a frame for $\psis{\psi_\epsilon}{\Gamma,\lr}$ for all $\epsilon \in (0,1)$.
\end{theorem}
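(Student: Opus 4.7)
The plan is to verify the two hypotheses of Theorem \ref{theo:frames} for $\psi_\epsilon$, namely condition (\ref{equ:Condition}) and boundedness of $[\psi_\epsilon,\psi_\epsilon]$ on $E_{\psi_\epsilon}$. The key simplifying feature is that $\F_\HD\psi_\epsilon(\lambda)=H_\epsilon(\lambda)$ is supported in $(\epsilon,1]$, so for any $\alpha\in(0,1]$ and $j\in\Z$ the value $\alpha+j$ belongs to $(\epsilon,1]$ only when $j=0$ and $\alpha>\epsilon$. Consequently the infinite sum defining each bracket (\ref{equ:Hbracket}) collapses to its $j=0$ term.

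First, I would compute
$$
[\psi_\epsilon,\psi_\epsilon](\alpha)=\chi_{(\epsilon,1]}(\alpha)\,\|u_\alpha\otimes u_\alpha\|_{\HS}^2\,\alpha^d=\chi_{(\epsilon,1]}(\alpha)\,\alpha^d,
$$
since $u_\alpha\otimes u_\alpha$ is a rank-one projector onto a unit vector and hence has unit Hilbert--Schmidt norm. This already identifies $E_{\psi_\epsilon}=(\epsilon,1]$ up to null sets, and on this set
$$
\epsilon^d\le [\psi_\epsilon,\psi_\epsilon](\alpha)\le 1.
$$

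Next, for $\gamma_1\in\Gamma_1\setminus\{0\}$, using the covariance formula (\ref{equ:exchange}) one has $\F_\HD\lr_{\gamma_1}\psi_\epsilon(\lambda)=\uir{\lambda}(\gamma_1)H_\epsilon(\lambda)$, and again only the $j=0$ term contributes:
$$
[\psi_\epsilon,\lr_{\gamma_1}\psi_\epsilon](\alpha)=\chi_{(\epsilon,1]}(\alpha)\,\langle u_\alpha\otimes u_\alpha,\,\uir{\alpha}(\gamma_1)(u_\alpha\otimes u_\alpha)\rangle_{\HS}\,\alpha^d.
$$
Since $\uir{\alpha}(\gamma_1)(u_\alpha\otimes u_\alpha)=(\uir{\alpha}(\gamma_1)u_\alpha)\otimes u_\alpha$, and the Hilbert--Schmidt inner product of rank-one operators $u\otimes v$ and $u'\otimes v'$ equals $\langle u,u'\rangle\,\overline{\langle v,v'\rangle}$, the bracket reduces to $\chi_{(\epsilon,1]}(\alpha)\,\langle u_\alpha,\uir{\alpha}(\gamma_1)u_\alpha\rangle_{L^2(\R^d)}\,\alpha^d$. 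The hypothesis that $\{\uir{\alpha}(\gamma_1)u_\alpha\}_{\gamma_1\in\Gamma_1}$ is orthonormal for a.e.\ $\alpha\in(\epsilon,1]$ then forces this inner product to vanish for $\gamma_1\ne 0$, giving condition (\ref{equ:Condition}).

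Both hypotheses of Theorem \ref{theo:frames} are thereby satisfied, so $\{\lr_\gamma\psi_\epsilon\}_{\gamma\in\Gamma}$ is a frame for $\psis{\psi_\epsilon}{\Gamma,\lr}$ with explicit constants $A=\epsilon^d$ and $B=1$. The only step requiring care is the algebraic identity expressing the HS inner product of two rank-one projectors in terms of scalar products of the generating vectors, together with the left-equivariance of $\F_\HD$ used to move $\uir{\alpha}(\gamma_1)$ inside; everything else is a direct consequence of the compact support of $H_\epsilon$.
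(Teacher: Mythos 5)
Your proposal is correct and follows essentially the same route as the paper: both arguments use the band-limitation of $H_\epsilon$ to collapse the bracket series to its $j=0$ term, obtain $[\psi_\epsilon,\psi_\epsilon](\alpha)=\chi_{(\epsilon,1]}(\alpha)\,\alpha^d$ (hence the bounds $\epsilon^d$ and $1$ on $E_{\psi_\epsilon}$), reduce the cross-bracket to $\langle \uir{\alpha}(\gamma_1)u_\alpha,u_\alpha\rangle_{L^2(\R^d)}$ so that orthonormality yields condition (\ref{equ:Condition}), and then invoke Theorem \ref{theo:frames}. The only cosmetic difference is that you evaluate the Hilbert--Schmidt inner product of the rank-one operators via the identity $\langle u\otimes v,u'\otimes v'\rangle_{\HS}=\langle u,u'\rangle\overline{\langle v,v'\rangle}$, whereas the paper writes out the corresponding double integral.
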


In order to prove Theorem \ref{theo:example} we start with two preliminary lemmata.

\begin{lemma}\label{lem:example1}
For all $\epsilon \in (0,1)$ and all $\alpha \in E_{\psi_\epsilon}$
$$
\epsilon^d \leq [\psi_\epsilon, \psi_\epsilon](\alpha) \leq 1 .
$$
\end{lemma}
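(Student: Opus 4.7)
The plan is to reduce the bracket to an explicit expression by exploiting the very restricted support of $H_\epsilon$ on $\R^*$. Since $\F_\HD \psi_\epsilon = H_\epsilon$ and $H_\epsilon(\lambda) = \P_\lambda$ is a rank-one projection on $L^2(\R^d)$ for $\lambda \in (\epsilon, 1]$ and vanishes elsewhere, a direct computation with the Hilbert--Schmidt norm shows $\|H_\epsilon(\lambda)\|_{\HS}^2 = \chi_{(\epsilon,1]}(\lambda)$.

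Substituting into Definition \ref{def:HDbracket} I would write
\begin{equation*}
[\psi_\epsilon,\psi_\epsilon](\alpha) \;=\; \sum_{j \in \Z} \|H_\epsilon(\alpha+j)\|_{\HS}^{\,2}\, |\alpha+j|^d \;=\; \sum_{j \in \Z} \chi_{(\epsilon,1]}(\alpha+j)\, |\alpha+j|^d .
\end{equation*}
The key observation is that, for $\alpha \in (0,1]$, the condition $\epsilon < \alpha+j \leq 1$ forces $j=0$: the choice $j \geq 1$ gives $\alpha+j > 1$, while $j \leq -1$ gives $\alpha+j \leq 0 < \epsilon$. Hence only one summand in the series is nonzero, and
\begin{equation*}
[\psi_\epsilon,\psi_\epsilon](\alpha) \;=\; \chi_{(\epsilon,1]}(\alpha)\, \alpha^d .
\end{equation*}

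From this explicit formula the set $E_{\psi_\epsilon} = \{\alpha \in (0,1] : [\psi_\epsilon,\psi_\epsilon](\alpha) > 0\}$ coincides with $(\epsilon, 1]$ up to a null set, and on this set $[\psi_\epsilon,\psi_\epsilon](\alpha) = \alpha^d$. The bounds $\epsilon^d \leq \alpha^d \leq 1$ for $\alpha \in (\epsilon, 1]$ immediately yield the statement. There is no real obstacle here; the entire proof is a bookkeeping exercise once one observes that the support condition $\lambda \in (\epsilon,1]$ selects at most one integer translate of any $\alpha \in (0,1]$.
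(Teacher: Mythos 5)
Your proof is correct and follows essentially the same route as the paper: both reduce the bracket to the single surviving term $j=0$ using the support of $H_\epsilon$ in $(\epsilon,1]$, obtain $[\psi_\epsilon,\psi_\epsilon](\alpha)=\alpha^d$ on $E_{\psi_\epsilon}=(\epsilon,1]$, and read off the bounds. Your explicit justification that $j\neq 0$ forces $\alpha+j\notin(\epsilon,1]$ is a welcome detail the paper leaves implicit.
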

\begin{proof}
Since $\psi_\epsilon$ is bandlimited
\begin{eqnarray*}
[\psi_\epsilon, \psi_\epsilon](\alpha) & = & \sum_{j \in \Z} \| \F_\HD \psi_\epsilon (\alpha + j)\|^2_{\HS(L^2(\R^d))}|\alpha + j|^d\\
& = & \sum_{j \in \Z} \|H_{\epsilon}(\alpha + j)\|^2_{\HS(L^2(\R^d))}|\alpha + j|^d\\
& = & \|H_{\epsilon}(\alpha)\|_{\HS(L^2(\R^d))}|\alpha|^d \ = \
\left\{
\begin{array}{cl}
\alpha^d & \epsilon < \alpha \leq 1\\
0 & \textnormal{otherwise}\ .
\end{array}
\right.
\end{eqnarray*}
\end{proof}

\begin{lemma}\label{lem:example2}
For all $\epsilon \in (0,1)$, all $\gamma_1 \in \Gamma_1$ and all $\alpha \in E_{\psi_\epsilon}$
$$
[\lr_{\gamma_1}\psi_\epsilon, \psi_\epsilon](\alpha) = \langle \uir{\alpha}(\gamma_1) u_{\alpha}, u_{\alpha}\rangle_{L^2(\R^d)} .
$$
\end{lemma}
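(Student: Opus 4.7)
The plan is to unfold the definition (\ref{equ:Hbracket}) using the intertwining identity (\ref{equ:exchange}), collapse the periodization to a single summand by exploiting the bandlimited support of $H_\epsilon$, and then evaluate the remaining Hilbert--Schmidt inner product using the rank-one factorization of $\P_\alpha$.

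First, I would substitute $\F_\HD \psi_\epsilon = H_\epsilon$ and, from (\ref{equ:exchange}), $\F_\HD(\lr_{\gamma_1}\psi_\epsilon)(\lambda) = \uir{\lambda}(\gamma_1) H_\epsilon(\lambda)$ into (\ref{equ:Hbracket}) to obtain
\[
[\lr_{\gamma_1}\psi_\epsilon, \psi_\epsilon](\alpha) = \sum_{j \in \Z}\langle \uir{\alpha+j}(\gamma_1) H_\epsilon(\alpha+j),\, H_\epsilon(\alpha+j)\rangle_{\HS}\,|\alpha+j|^d.
\]
The key observation is that $H_\epsilon(\lambda) = 0$ whenever $\lambda \notin (\epsilon, 1]$, and that $E_{\psi_\epsilon} \subset (\epsilon,1]$ by Lemma \ref{lem:example1}. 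Hence, for any $\alpha \in E_{\psi_\epsilon}$, the only integer $j$ satisfying $\alpha + j \in (\epsilon, 1]$ is $j = 0$, and the entire series collapses to its $j = 0$ summand.

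Second, I would evaluate the remaining Hilbert--Schmidt inner product using the rank-one factorization $H_\epsilon(\alpha) = \P_\alpha = u_\alpha \otimes u_\alpha$. Since $\uir{\alpha}(\gamma_1)$ acts by left multiplication on Hilbert--Schmidt operators,
\[
\uir{\alpha}(\gamma_1)(u_\alpha \otimes u_\alpha) = (\uir{\alpha}(\gamma_1) u_\alpha) \otimes u_\alpha,
\]
and the standard identity $\langle a \otimes b,\, c \otimes d\rangle_{\HS} = \langle a, c\rangle_{L^2(\R^d)}\,\overline{\langle b, d\rangle}_{L^2(\R^d)}$, combined with the unit normalization $\|u_\alpha\|_{L^2(\R^d)} = 1$, reduces the trace to $\langle \uir{\alpha}(\gamma_1) u_\alpha,\, u_\alpha\rangle_{L^2(\R^d)}$. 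Putting the pieces together yields the claimed identity.

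There is no real conceptual obstacle: the argument is essentially a direct computation. The only care required is in the bookkeeping of the rank-one Hilbert--Schmidt inner product (namely, $(a \otimes b)^* = b \otimes a$ and $(a \otimes b)(d \otimes c) = \langle d, b\rangle\, a \otimes c$) and in the observation that $E_{\psi_\epsilon} \subset (\epsilon, 1]$, which is precisely what allows the periodization to collapse.
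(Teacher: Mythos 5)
Your argument is correct and essentially identical to the paper's: use (\ref{equ:exchange}) to rewrite $\F_\HD(\lr_{\gamma_1}\psi_\epsilon)$, collapse the periodization to the $j=0$ term via the band-limited support of $H_\epsilon$, and evaluate the rank-one Hilbert--Schmidt inner product $\langle (\uir{\alpha}(\gamma_1)u_\alpha)\otimes u_\alpha,\, u_\alpha\otimes u_\alpha\rangle_{\HS}$ using $\|u_\alpha\|_{L^2(\R^d)}=1$. One small point, shared with (and inherited from) the paper's own computation: the surviving $j=0$ summand still carries the Plancherel weight $|\alpha|^d$, so what the calculation actually produces is $|\alpha|^d\,\langle \uir{\alpha}(\gamma_1)u_\alpha, u_\alpha\rangle_{L^2(\R^d)}$ rather than the unweighted inner product in the statement --- a harmless discrepancy, since only the vanishing of the bracket for $\gamma_1\neq 0$ is used in the proof of Theorem \ref{theo:example}.
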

\begin{proof}
Again using the bandlimitness of $\psi_\epsilon$ to reduce the series to the only nonzero term
\begin{eqnarray*}
[\lr_{\gamma_1}\psi_\epsilon, \psi_\epsilon](\alpha) & = & \sum_{j \in \Z} \langle \F_\HD \lr_{\gamma_1}\psi_\epsilon (\alpha + j), \F_\HD \psi_\epsilon (\alpha + j)\rangle_{\HS(L^2(\R^d))}|\alpha + j|^d\\
& = & \sum_{j \in \Z} \langle \uir{\alpha + j}(\gamma_1) H_{\epsilon}(\alpha + j), H_{\epsilon}(\alpha + j)\rangle_{\HS(L^2(\R^d))}|\alpha + j|^d\\
& = & \langle \uir{\alpha}(\gamma_1) H_{\epsilon}(\alpha), H_{\epsilon}(\alpha)\rangle_{\HS(L^2(\R^d))}|\alpha|^d .
\end{eqnarray*}
Now, since $\uir{\alpha}(\gamma_1) \P_\alpha = (\pi_{\alpha}(\gamma_1) u_\alpha)\otimes u_\alpha$, when $\epsilon < \alpha \leq 1$ we have
\begin{eqnarray*}
\langle \pi_{\alpha}(\gamma_1) H_{\epsilon}(\alpha), H_{\epsilon}(\alpha)\rangle_{\HS(L^2(\R^d))} & = & \int_{\R^d} \int_{\R^d} \overline{u_\alpha(x)} (\pi_{\alpha}(\gamma_1)u_\alpha(y)) \overline{u_\alpha(y)} u_\alpha(x) dx dy\\
& = & \langle \pi_{\alpha}(\gamma_1) u_{\alpha}, u_{\alpha}\rangle_{L^2(\R^d)} .
\end{eqnarray*}
\end{proof}

\begin{proof}[Proof of Theorem \ref{theo:example}]
In order to get a frame, we need only to verify condition $b)$ of Theorem \ref{theo:frames}, which is granted by Lemma \ref{lem:example1}, and condition (\ref{equ:Condition}), that holds by Lemma \ref{lem:example2} whenever $\{\pi_\alpha(\gamma_1)u_\alpha\}_{\gamma_1 \in \Gamma_1}$ is an orthonormal system.
\end{proof}

 % \begin{remark} 
 %Note that the results of this section can also  be  obtained if we take   $u:=u_\alpha$ for some unitary  function $u$.\\
 %\end{remark} 

\begin{example} 
One example of unitary $u \in L^2(\mathbb R^d)$ and $\Gamma_1$ for which $\{\pi_\alpha (\gamma_1) u_\alpha\}_{\gamma_1\in \Gamma_1}$
is an orthonormal system of $L^2(\mathbb R^d)$ for $0 < \epsilon < \alpha \leq 1$ is the following. Choose $u = \chi_{(0,1)}\times \cdots\times \chi_{(0,1)}\in L^2(\R^d)$  and 
$\Gamma_1 = a\mathbb Z^d \times b\mathbb Z^d$ with $a,b \in \mathbb Z$. 
%a > \frac{1}{\epsilon}.$
 The desired orthonormality follows from
$$ \pi_\alpha (\gamma_1) u_\alpha(y) = |\alpha|^{d/2}e^{-2\pi i\alpha b\langle m, y\rangle} \chi_{(0,\frac{1}{\alpha})\times \cdots \times (0,\frac{1}{\alpha})} (y-an)$$
for $\gamma_1 = (an, bm,0) \in \Gamma_1\,.$

\end{example}

\section{$SI/Z$ groups}\label{sec:SIZ}
In this section we consider a class of nilpotent groups distinct from the stratified groups: following \cite{corwin}, we say that  a simply connected nilpotent Lie group $\G$ is an {\it $SI/Z$ group} if almost all of its irreducible representations  $\Pi$ are square-integrable modulo the center of the group. An irreducible representation $\Pi$ of $\G$ on the Hilbert space $\H$ is {\it square-integrable modulo the center} if there exist $u, v \in \H$ such that
$$
\int_{\G/Z}\left\vert \left\langle \Pi\left(  n\right)  u,v\right\rangle_\H
\right\vert ^{2}d\dot n  <\infty.
$$
 
We remark that the class of $SI/Z$ groups is broad and in particular contains the Heisenberg group as well as groups of an arbitrarily high degree of non-commutativity.

The effect of the $SI/Z$  condition is that for this class of groups, the operator-valued Plancherel transform retains certain key features of the Euclidean case, and in particular, makes it possible to extend to $SI/Z$ groups the 
notion of bracket map. The following construction, leading to the Plancherel theorem, can be found in \cite{CMO, corwin}.

For a $SI/Z$ group $\G$, fix a basis $\{ X_1, \dots, X_n\}$ for its Lie algebra $\g$, ordered in such a way that $\g_j = \textnormal{span}\{X_1, \dots, X_j\}$ is an ideal in $\g$ (central series) and $\g_r$ is the center $\z$ of $\g$ for some $r= n-2d$. Then the center $Z$ of $\G$ can be identified with $\R^r$ by choosing exponential coordinates
$$
Z = e^{z_1X_1} e^{z_2X_2}\cdots e^{z_rX_r}
$$
where $(z_1, z_2, \dots, z_r)\in \R^r$, while the subset of $\G$
$$
{\mathcal X} = e^{\R X_{r+1}}e^{\R X_{r+2}}\cdots e^{\R X_{n}}
$$
is identified by $\R^{2d}$. For any $g\in \G$ there is a unique $x\in {\mathcal X}$ and a unique $z\in Z$ such that $g = xz = zx$, therefore the group $\G$ is identified by $\R^{2d} \times \R^r$.

\newpage

Let $\g^*$ denote the linear dual of $\g$, and let $\z^*$ denote the subspace
$$
\z^* = \{\lambda \in \g^* \ | \ \lambda(X_j) = 0 \, , r < j \leq n\} .
$$
Clearly $\z^* \approx \z \approx \R^r \approx Z$ as topological spaces. Let us also introduce the so-called Pfaffian determinant
$$
\begin{array}{rccl}
\rho : & \z^* & \rightarrow & \R^+\\
& \lambda & \mapsto & \left| \det \Big( \lambda([X_i, X_j])\Big)_{i,j = r, \dots, n}\right|
\end{array}
$$
and denote with $\g_j(\lambda)$ the maximal subalgebra $m_j$ of $\g_j$ such that $\lambda([m_j,m_j]) = 0$.
Here $[\cdot,\cdot]$ stands for the Lie algebra commutator. With this notation, the following Plancherel theorem holds.
\begin{theorem}
Let $\G$ be an $SI/Z$ group and let $\Sigma = \{\lambda \in \z^* \ | \ \rho(\lambda) > 0\}$. Moreover, let $\Pi_\lambda$ be the representation of $\G$ induced by the subalgebra
$$
P(\lambda) = \sum_{j = 1}^n \g_j(\lambda) \, ,\quad \lambda \in \Sigma .
$$
Then $\Pi_\lambda$ is a unitary irreducible representation on $L^2(\R^d)$, and the group Fourier transform
$$
\F_\G \varphi (\lambda) = \int_\G \varphi(g) \Pi_\lambda(g) dg
$$
where $dg$ is the Haar measure of $\G$, implements an isometric isomorphism - the Plancherel transform - 
$$
\F_\G : L^2(\G) \rightarrow L^2(\z^*, \HS(L^2(\R^d)), \rho(\lambda) d\lambda)
$$
where $d\lambda$ is the Lebesgue measure on $\z^* \approx \R^r$.
\end{theorem}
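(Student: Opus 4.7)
My plan is to invoke the Kirillov orbit method for nilpotent Lie groups, specialized to the $SI/Z$ setting where generic coadjoint orbits attain the maximal dimension $2d$. The unitary dual of $\G$ is parameterized by coadjoint orbits in $\g^*$ under the coadjoint action; the set $\Sigma \subset \z^*$ parameterizes the generic orbits precisely because for $\lambda \in \Sigma$ the radical $\g(\lambda) = \{X \in \g : \lambda([X,\g]) = 0\}$ coincides with $\z$, which is the infinitesimal reformulation of square-integrability modulo the center.

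First, for each $\lambda \in \Sigma$ I would verify that $P(\lambda) = \sum_j \g_j(\lambda)$ is a subalgebra of codimension $d$ on which $\lambda([\cdot,\cdot]) = 0$. This is the Vergne polarization, and the ordering of the basis so that each $\g_j$ is an ideal makes the family $\g_j(\lambda)$ nested and ensures that the sum is a subalgebra. Setting $P = \exp P(\lambda)$, the character $\chi_\lambda(\exp X) = e^{2\pi i \lambda(X)}$ is well defined on $P$, and the induced representation $\Pi_\lambda = \mathrm{Ind}_P^\G \chi_\lambda$ acts unitarily on $L^2(\G/P) \cong L^2(\R^d)$. Kirillov's theorem then delivers irreducibility and shows that two elements of $\Sigma$ yield equivalent representations if and only if they lie on the same coadjoint orbit.

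Next I would turn to the Plancherel identity. Using the decomposition $\G = {\mathcal X} \cdot Z$ with ${\mathcal X} \cong \R^{2d}$ and $Z \cong \R^r$, the operator $\F_\G \varphi(\lambda)$ can be written as an integral operator on $L^2(\R^d)$ whose kernel is a partial Fourier transform of $\varphi$ in the central variable, composed with the modulation/translation structure built into $\Pi_\lambda$. Computing $\|\F_\G \varphi(\lambda)\|_{\HS}^2$ via Plancherel on $\R^{2d}$, then integrating against $\rho(\lambda)d\lambda$ and applying Plancherel on $\R^r$ in the central coordinates, should yield
$$\|\varphi\|_{L^2(\G)}^2 = \int_{\z^*} \|\F_\G \varphi(\lambda)\|_{\HS}^2 \, \rho(\lambda)\,d\lambda$$
for $\varphi \in L^1 \cap L^2(\G)$. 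The factor $\rho(\lambda)$ enters as the Jacobian relating the parameterization of the coadjoint orbit ${\mathcal O}_\lambda$ by ${\mathcal X}$ to the ambient Euclidean measure on $\g^*$; this is where the Pfaffian appears naturally. Isometry on all of $L^2(\G)$ then follows by density, and surjectivity onto $L^2(\z^*, \HS(L^2(\R^d)), \rho(\lambda)d\lambda)$ is obtained by exhibiting a dense family of targets in the image (for example rank-one operators $u \otimes v$ with smooth compactly supported matrix coefficients $\langle \Pi_\lambda(\cdot) u, v\rangle$).

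The main obstacle is the identification of the Plancherel density: matching the Jacobian of the map ${\mathcal X} \to {\mathcal O}_\lambda$ with the Pfaffian $\rho(\lambda)$ requires careful coordinate bookkeeping on the Lie algebra side, and this is precisely where the $SI/Z$ hypothesis is used to guarantee that generic orbits have full dimension $2d$, so that the parameterization is nondegenerate and $\Pi_\lambda$ acts on $L^2(\R^d)$ rather than on a lower-dimensional factor. Since the statement is essentially classical, the most efficient writeup would cite \cite{corwin} and \cite{CMO} for the detailed verification, while emphasizing the role of $\Sigma$, the Vergne polarization $P(\lambda)$, and the Pfaffian $\rho$ as they enter the bracket-map framework developed in the earlier sections.
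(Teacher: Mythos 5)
The paper does not actually prove this statement: it is imported verbatim from the literature, with the sentence ``The following construction, leading to the Plancherel theorem, can be found in \cite{CMO, corwin}'' serving as the entire justification. So there is no in-paper argument to match yours against, and your outline is the standard one from exactly those sources: Vergne polarizations $P(\lambda)=\sum_j \g_j(\lambda)$ built from the nested ideals $\g_j$, induction of the character $\chi_\lambda$ to get $\Pi_\lambda$, Kirillov's theorem for irreducibility, the Moore--Wolf flat-orbit characterization of square-integrability mod the center (generic orbits are the affine subspaces $\lambda+\z^\perp$ of dimension $2d$, equivalently $\g(\lambda)=\z$ for $\lambda\in\Sigma$), and the Pfaffian as the Plancherel density arising from the Jacobian of the orbit parameterization. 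Your plan to prove the isometry first on $L^1\cap L^2$ via partial Fourier transform in the central variable and then extend by density, with surjectivity from a dense family of rank-one targets, is the correct architecture. Two small points deserve care if you write this out: (i) the density is the absolute value of the \emph{Pfaffian} of the skew form $\lambda([X_i,X_j])$ on $\g/\z$, i.e.\ $|\det|^{1/2}$ of a $2d\times 2d$ matrix (the paper's formula, with indices $i,j=r,\dots,n$ and no square root, is off by exactly this normalization --- compare the Heisenberg case, where the density is $|\lambda|^d$, not $|\lambda|^{2d}$); and (ii) for flat orbits distinct $\lambda\in\Sigma$ lie on distinct coadjoint orbits, so the parameterization $\lambda\mapsto\Pi_\lambda$ is injective on $\Sigma$, which is what makes $\z^*$ (rather than an orbit space) the correct base for the direct integral. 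Given that the paper itself only cites \cite{corwin} and \cite{CMO}, your closing suggestion to do the same while highlighting the role of $\Sigma$, $P(\lambda)$ and $\rho$ is entirely consistent with the paper's treatment.
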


This theorem actually provides all we need to define a bracket like (\ref{equ:Hbracket}). More precisely we can extend such definition to
\begin{equation}\label{equ:Gbracket}
[\varphi, \psi](\alpha) = \sum_{j \in \Z^r} \langle \F_\G \varphi(\alpha + j), \F_\G \psi(\alpha + j)\rangle_{\HS} \, \rho(\alpha + j)
\end{equation}
for $\varphi, \psi \in L^2(\G)$ and a.e. $\alpha \in \T^r$, the $r$ dimensional torus, that can be identified with $[0, 1)^r$ in $\R^r$ as usual.

Indeed, the bracket (\ref{equ:Gbracket}) is related to scalar product of $L^2(\G)$ in the same way as (\ref{equ:Hbracket}) was, i.e. by means of Plancherel theorem and a periodization over $\R^r$:
\begin{align*}
\langle \varphi, \psi\rangle_{L^2(\G)} & = \int_{\R^r} \langle \F_G\varphi(\lambda), \F_G\psi(\lambda)\rangle_{\HS(L^2(\R^d))} \rho(\lambda) d\lambda\\
& = \sum_{j \in \Z^r} \int_{\T^r} \langle \F_G\varphi(\alpha + j), \F_G\psi(\alpha + j)\rangle_{\HS(L^2(\R^d))} \rho(\alpha + j) d\alpha .
\end{align*}
The left translation  $L$ is a unitary representation of $\G$ onto $L^2(\G)$ and is defined by the group action. 
The Plancherel transform intertwines the left translation operator $L_{xz}, \ xz \in \G$ with the representation $\pi_\lambda$: for $\phi \in L^1(\G)\cap L^2(\G)$

$$
\mathcal F_\G{\bigl(L(xz) \phi\bigr)}(\lambda) = \pi_\lambda(xz) \bigl(\mathcal F_\G\phi\bigr)(\lambda) = e^{2\pi i \langle \lambda,  z\rangle} \pi_\l(x) \mathcal F_\G\phi(\lambda)\ |\rho(\l)|^{1/2}, \  \ \l \in \z^*.
$$
% We denote the $r$ dimensional torus by $\mathbb{T}^r$ and identify it with $[0, 1)^r$ in $\R^r$ as usual.

%Let $\mathcal{L}$ denote  the Hilbert space
%$l^{2}\left(  \mathbb{Z}^{r},\mathcal{H}  \right)$ with the norm  
%$$
%\left\Vert h\right\Vert _{\mathcal{L}}^{2}=\sum_{j\in\mathbb{Z}^{r}%
%}\| h_j\|^2_{\mathcal{H} }.
%$$

% Let $\Gamma_1$ be any discrete subset of $\mathcal X$ and let $\Gamma_2$  be  the integer lattice $\Gamma_2 = \exp \Z X_1 \exp \Z X_2 \cdots \exp \Z X_r$.   We will consider the lattice subgroup $\Gamma$ of $\G$  given by following form. 
% $$
% \Gamma = \{ \gamma_1\gamma_2\in \G:  \gamma_1 \in \Gamma_1, \gamma_2 \in \Gamma_2\}. 
% $$
% With an analog discussion in the previous sections,  one can show that the results of this paper are also  true for   $SI/Z$ groups. To avoid repeating the calculations, we leave the proof of the results  for the $SI/Z$ groups to the interested readers. 

\newpage

By the same arguments of the previous sections, then, one can show that the results of this paper are also true for $SI/Z$ groups when we consider a lattice subgroup $\Gamma$ of $\G$ given by following form
$$
\Gamma = \{ \gamma_1\gamma_2\in \G: \gamma_1 \in \Gamma_1, \gamma_2 \in \Gamma_2\}
$$
where $\Gamma_2$ is the integer lattice $\Gamma_2 = \exp \Z X_1 \exp \Z X_2 \cdots \exp \Z X_r$ and $\Gamma_1$ is a compatible discrete subset of $\mathcal X$. To avoid repeating the calculations, we leave the proof of the results for the $SI/Z$ groups to the interested readers.

\

\

\textsc{Davide Barbieri}:
CAMS, EHESS, Paris, France, \emph{davide.barbieri8@gmail.com}

\textsc{Eugenio Hern\'andez}:
Departamento de Matem\'aticas, Universidad Aut\'onoma de Madrid, 28049, Madrid, Spain, \emph{eugenio.hernandez@uam.es}

\textsc{Azita Mayeli}:
City University of New York,  \emph{AMayeli@qcc.cuny.edu}

\end{document}